\newcommand{\R}{\mathbb R}
 \newcommand{\Z}{\mathbb Z}
 \newcommand{\N}{\mathbb N}
\newcommand{\ve}{\varepsilon}
\newcommand{\vp}{\varphi}
\newcommand{\ess}{\,{\rm ess}}
\newcommand \loc    {\text{loc}}
\DeclareMathOperator  \sgn {sgn} 
\newcommand{\dive}{{\rm div}}
\newcommand{\clg}[1]{{\mathcal{#1}}}
\newcommand{\ol}[1]{{\overline{#1}}}
\newtheorem{theorem}{Theorem}[section]
 \newtheorem{remark}[theorem]{Remark}
\newtheorem{lemma}[theorem]{Lemma}
\newtheorem{definition}[theorem]{Definition}
\begin{document}
%\title{Strong traces for weak solutions\\
%to stochastic transport equations}
%
\title{Initial-boundary value problem for 
\\
stochastic transport equations}

\author{Wladimir Neves$^1$, Christian Olivera$^2$}

\date{}

\maketitle

\footnotetext[1]{ Instituto de Matem\'atica, Universidade Federal
do Rio de Janeiro, C.P. 68530, Cidade Universit\'aria 21945-970,
Rio de Janeiro, Brazil. E-mail: {\sl wladimir@im.ufrj.br.}

\textit{Key words and phrases. 
Stochastic partial differential equations, transport
equation, well-posedness, initial-boundary value problem.}}

%\vspace{0.3cm} \noindent {\bf MSC2000 subject classification:} 60H10
%, 35F15,  60H25, 60H15  .

%
%%%%%%%%%%%%%%%%%%%%%%%%%%%%%%%%%%%%%%%%%%%%%%%%%%%%%%%%%%%%%%%%%%%%%%%%%%%%
\begin{abstract}
This paper concerns the Dirichlet initial-boundary value problem for
stochastic transport equations with non-regular coefficients. 
First, the existence and uniqueness of the strong stochastic traces is proved. 
The existence of weak solutions relies on 
the strong stochastic traces, and also
on the passage from the 
Stratonovich into It\^o's formulation
for bounded domains.
Moreover, the uniqueness is established 
without the divergence of
the drift vector field bounded from below.

% the uniqueness proof
%does not require that, the divergence of
%the drift vector field is bounded from below. 
 \end{abstract}
%%%%%%%%%%%%%%%%%%%%%%%%%%%%%%%%%%%%%%%%%%%%%%%%%%%%%%%%%%%%%%%%%%%%%%%%%%%%
%
\maketitle

%\begin{center}
%{\large
%Christian Olivera\footnote{Research  supported  FAEPEX 1324/12, FAPESP 2012/18739-0, 2012/18780-0  . }}\\
%
%\textit{Departamento de
% Matem\'{a}tica, Universidade Estadual de Campinas, \\ 
% Campinas - SP, Brasil}
%\\  e-mail:  colivera@imeunicamp.br}
%\end{center}

%%%%%%%%%%%%%%%%%%%%%%%%%%%%%%%%%%%%%%%%%%%%%%%%%%%%
\section {Introduction} \label{Intro}
%%%%%%%%%%%%%%%%%%%%%%%%%%%%%%%%%%%%%%%%%%%%%%%%%%%%

A great deal of attention has recently been given to the study of stochastic 
partial differential equations.
We are interested in random description of physical problems, 
where the probabilistic term appears as a perturbation of the velocity vector field. 
In this direction, it was S. Ogawa \cite{SO}
who initiated the analysis of wave propagation in random media. 

In this article we establish global existence and uniqueness of
solutions for the stochastic linear transport equations (SLTE for short) in bounded domains. 
Namely, we consider 
the following initial-boundary value problem: Given a standard Brownian motion
$B_{t} = (B_{t}^{1},...,B _{t}^{d} )$ in $\mathbb{R}^{d}$,  
find $u(t,x) \in \R$, satisfying 
\begin{equation}\label{trasport}
 \left \{
\begin{aligned}
    &\partial_t u(t, x,\omega) + \Big(b(t, x) + \sigma \frac{d B_{t}}{dt}(\omega) \Big) \cdot \nabla u(t, x,\omega)= 0,
    \\[5pt]
    &u{|_{t=0}}= u_0, \qquad 
    u|_{\Gamma_T}= u_b, 
\end{aligned}
\right .
\end{equation}
with $(t,x) \in U_T:= [0,T] \times U$, where
 $T> 0$ is any fixed real number,
$U$ is an open and bounded domain of $\R^d$ $(d \in \N)$, 
$\omega \in \Omega$ is an element of the probability space $(\Omega, \mathbb{P}, \clg{F})$,
and the stochastic integration is taken in the Stratonovich sense. The parameter
$\sigma= 1$ most of the time, and equals zero when we talk about \eqref{trasport} in the 
deterministic case.
Moreover, we denote 
by $\Gamma$ the $C^2$-boundary of $U$, with 
the outside normal field to $U$ at $r \in \Gamma $ denoted by $%
\mathbf{n}(r)$, and define $\Gamma_T:= (0,T) \times \Gamma$.

Here, we assume that the initial and boundary data respectively $u_0$, $u_b$ 
are measurable and bounded functions with respect to the usual measures, that is, Lebesgue 
(denoted by $dx$, or $d\xi$, etc.) and Hausdorff (denoted by $\clg{H}^{d-1}(r)$ or $dr$) tensor $dt$. 

The vector field $b: (0,T) \times \R^d \to \R^d$, called drift, satisfies the following conditions: 
For any $q> 2$ and some non-negative functions $\alpha, \, \gamma \in L^1_\loc(\R)$,
\begin{equation}
\label{con1}
    b \in  L^q((0,T); BV_{\loc}(\R^d; \R^d)), \quad \dive b\in L^{1}_{\loc}((0,T) \times \R^d),  
\end{equation}
\begin{equation}
\label{BCONDUNQ}
  |b(t,x)| \leq \alpha(t), \qquad \dive b(t,x) \leq \gamma(t).
\end{equation}
Let us remark that, we assume $q> 2$ 
in order to use the machinery developed for the Ladyzhenskaya-Prodi-Serrin condition, see Remark \ref{stochastic influx}.
Also the $BV$ regularity guarantee the restriction of $b(t,\cdot)$ on $\Gamma$ in the sense of trace, and
allow us to make use of commutators (in 
particular to show the existence of
strong stochastic trace).
Moreover, we follow Funaki \cite{Fu} where the main tool to show existence of weak solutions for
regular-coefficients is a time reversed process
(see Section \ref{preliminares}),
thus the vector function $b(t,\cdot)$ is defined in the all space $\R^d$. In any case, if $b(t,\cdot)$ is just defined
in $U$, then we may use an extension theorem for $BV$ functions. 

Now, let us briefly recall that
the problem \eqref{trasport} has been treated  for the case  $U=\mathbb{R}^{d}$  by many authors, 
both for the deterministic and stochastic cases,
see for instance \cite{Falnanas},  \cite{ambrosio}, \cite{ambrisio2} \cite{ambrisio2}  \cite{DL},  \cite{FNO}, \cite{FGP2}, \cite{Ku},  \cite{Moli}, \cite{WNCO2}.
% 
%Most of the above cited works are supported by 
%the theory of renormalized solutions of the linear transport equation,
%as it was used by 
DiPerna, Lions in \cite{DL} (deterministic case)
%DiPerna-Lions in that paper 
proved that $W^{1,1}$ 
spatial regularity of $b(t,x)$
(together with a condition of boundedness on the divergence) is enough to ensure uniqueness of weak solutions. 
Moreover, they deduced the existence, uniqueness and stability results for 
ordinary differential equations with rough coefficients from corresponding results on the associated linear transport equation. 
Ambrosio in \cite{ambrosio} following the same strategy in \cite{DL}, but applying a measure-theoretic framework,
generalized the results to the case where the coefficients have only bounded variation regularity by considering the continuity equation. 
%Moreover, 
%it were establish in \cite{ambrosio} the important concept of Regular Lagrangian Flows related to 
%ordinary differential equations. 
%
Then, Flandoli, Gubinelli and Priola
in \cite{FGP2} proved that, 
the stochastic problem is better behaved than the deterministic one (the first result in this direction).
They obtained wellposedness of the stochastic problem for an H\"older continuous drift term, 
with some integrability conditions on the divergence.

The premiere researches of linear transport equations (deterministic case) in bounded domains
was done by Bardos  \cite{BARDOS}. In that extended paper, Bardos considered the regular case 
($b$ has Lipschitz regularity), and established the correct 
understanding of how the Dirichlet boundary condition should be assumed, 
where the notion of the influx boundary zone is important, that is
\begin{equation}
\Gamma _{T}^{-}:=\big\{ (t,r) \in \Gamma _{T}:( b \cdot 
\mathbf{n})(t,r)< 0 \big \} . \label{flux}
\end{equation}
Then, we mention the work of Mischler \cite{Misch}, who considered weak solutions for the Vlasov equation (instead of 
the transport equation) posed in bounded domains. 
In that paper, the trace problem for linear transport type equations
is discussed in details. 
One observes that, if $u$ is not sufficiently regular, in particular we look
for measurable and bounded solutions, the restriction to negligible Lebesgue sets is not, a priori, defined. Therefore,
one has to deal with the traces theory to ensure the correct notion of the Dirichlet boundary condition. In the same 
direction as Mischler \cite{Misch}, Boyer \cite{Bo} established the trace theorems 
with respect to the measure $\mu$ defined on $\Gamma_T$ as 
\begin{equation}
\label{MUMEASURE}
  d\mu :=(b \cdot \mathbf{n})\,d{r}dt,
\end{equation}  
and showed the existence and uniqueness of solutions for the transport equation
using the Sobolev framework of DiPerna, Lions \cite{DL}. 
More recently, 
Crippa, Donadello, Spinolo \cite{CDS2} studied the initial-boundary value problems for
continuity equations with total bounded variation coefficients. We stress that, there does not 
exist strong trace results for (deterministic) transport equations with non-regular coefficients.
Indeed, let us recall the following counter-example given by 
Neves, Panov and Silva in \cite{WNEPJS}, (with slight modifications). 
First, we define the 
%$3D$ vector field $A(t,x_1,x_2):= (1, B(t, x_1, x_2))$ in $(0,\infty) \times \R^2$, where the 
$2D$ vector field $b(t,x)$ as follows:

1. If $t \in [1, 3/2]$, then
$$
     b(t,x)= B(t-1,x) \quad \text{in the square $x= (x_1,x_2) \in [0,1]^2$,}
$$
where the vector $B= B(t,x)$ (for $t \in [0, 1/2]$ only) is taken from Lemma 10 of the
paper by Kneuss, Neves (see \cite{OKWN}),
and it is extended by periodicity for all $(x_1,x_2) \in \R^2$.
Again, we reproduce it here for the convenience of the reader.
Define $c\in L^{\infty}([-1/2,1/2]^2;\mathbb{R}^2)$ and $d\in L^{\infty}([-1/2,1/2]\times [-1/4,1/4];\mathbb{R}^2)$ respectively by
$$c(x):=\left\{
\begin{array}{cl}
(0,8 x_1)& \text{if $|x_2|<|x_1|<1/2$,}\\
(-8 x_2,0)& \text{if $|x_1|<|x_2|<1/2$,}\\
0&\text{elsewhere,}
\end{array}
\right.
$$
and
$$d(x):=\left\{
\begin{array}{cl}
(0,4 x_1)& \text{if $|2x_2|<|x_1|<1/2$,}\\
(-16 x_2,0)& \text{if $|x_1|<|2x_2|<1/2$,}\\
0&\text{elsewhere.}
\end{array}
\right.
$$
Then $\operatorname{div}c=0$ in $\clg{D}'((-1/2,1/2)^2)$ and the normal component of $c$ is $0$ across $\partial
(-1/2,1/2)^2$. Similarly, $\operatorname{div} d=0$ in $\clg{D}'((-1/2,1/2)\times (-1/4,1/4))$ and the normal component of $d$ is $0$ across $\partial
\left[(-1/2,1/2)\times (-1/4,1/4)\right]$. Moreover, the vectors $c$ and $d$
are $BV(\R^2; \R^2)$, (see \cite{OKWN}, p.150). 
Notice, that the flow $\xi^c(t,\cdot)$ generated by the field $c$ is a ``square" rotation in the ``angle'' $2\pi t$
(for $t= k/4, k \in \mathbb{Z})$. Similarly, the flow $\xi^d(t,\cdot)=R^{-1}\xi^c(t,\cdot)R$ is the ``rectangle'' rotation, here $R(x_1,x_2)=(x_1,2x_2)$.
Therefore, we define $B= B(t,x) \in L^{\infty}$ by
$$B(t,x):=\left\{
\begin{array}{cl}
d(x_1-1/2,x_2-1/4) &\text{for $0\leq t<1/4$ and $x\in [0,1]\times [0,1/2], $}\\
d(x_1-1/2,x_2-3/4) &\text{for $0\leq t<1/4$ and $x\in [0,1]\times [1/2,1], $}\\
-c(x_1-1/2,x_2-1/2) &\text{for $1/4\leq t \leq1/2$ and $x\in [0,1]\times [0,1],$ }
\end{array}
\right.
$$
and then extend it for all $x\in\R^2$ by the periodicity. Since, the normal component of $B(t,\cdot)$ is zero across $\partial [0,1]^2$, then  $\operatorname{div} B(t,\cdot)= 0$ in $\clg{D}'(\mathbb{R}^2)$. The flow $(t,\xi^B(t,x))$ generated by the field $(1,B)$ satisfies the following relation in the square $(0,1)^2$
\begin{equation}\label{pan1}
\xi^B(1/2,x)=\left\{\begin{array}{lcr} (x_1/2,2x_2), & , & 0<x_2<1/2, \\ ((x_1+1)/2,2x_2-1) & , & 1/2<x_2<1. \end{array} \right.
\end{equation}
It follows from (\ref{pan1}) that for a $1$-periodic function $\phi(y)$, $u(\xi^B(1/2,x))=\phi(2x_2)$ if $u(x)=\phi(x_2)$. 

2. For $t \in [(2/3)^k, (2/3)^{k-1})$, $(k= 1,2, \ldots)$,
we define $b(t,x)$ by the scaling
$$
   b(t,x)= (3/4)^k \ B((3/2)^k t - 1, 2^k x).
$$
So that for $0\le t\le\tau_k\doteq\frac{1}{2} (2/3)^k$ the corresponding flow on the square $2^{-k}(0,1)^2$ has the form $((2/3)^k+t,2^{-k}\xi^B((3/2)^k t,2^kx))$. In fact, denoting $\xi^b(t,x)=2^{-k}\xi^B((3/2)^k t,2^kx)$, we have
$$
\begin{aligned}
\frac{d}{dt}\xi^b(t,x)&= (3/4)^k b((3/2)^k t,\xi^B((3/2)^k t,2^kx))
\\
   &= (3/4)^k b((3/2)^k t,2^k\xi^b(t,x))= b((2/3)^k+t,\xi^b(t,x)),
\end{aligned}
$$
and $\xi^b(0,x)=x$. 
In view of the periodicity of $B(t,\cdot)$ the flow on the shifted squares $2^{-k}(z+(0,1)^2)$, $z\in\Z^2$, has the form
$\xi^b(t,x)=2^{-k}z+\xi^b(t,x-2^{-k}z)$. Observe that 
\begin{equation}\label{pan2}
\xi^b(\tau_k,x)= 2^{-k}\xi^B(1/2,2^kx).
\end{equation}
This means that the time $\tau_k$ is just enough to complete all the rotations. We underline that the time scale $(2/3)^k$ decays slower than the space scale $2^{-k}$, which allow to use the smaller field (which is taken with the small factor $(3/4)^k$ just equaled to the ratio of the space and the time scales).

Relation (\ref{pan2}) implies that for a $2^{-k}$-periodic $\phi(y)$ 
\begin{equation}\label{pan3}
u(\xi^b(\tau_k,x))= \phi(2x_2), \quad \mbox{ where } u(x)=\phi(x_2).
\end{equation}
Notice that the function $\phi(2y)$ is $2^{-k-1}$-periodic. 

3. For $t> 3/2$ we set $b \equiv 0$.

Choosing the nonconstant 1-periodic function $\phi(y)$,
we may construct (by the usual method of characteristic) a solution of the transport equation
$$
   \partial_t u(t,x) + b(t,x) \cdot \nabla u(t,x)= 0,
$$
which satisfies $u(3/2, x)= \phi(x_2)$. Applying (\ref{pan3}) for $k=0,1,\cdots$, we arrive at the relations
$u( (2/3)^k, x)= \phi(2^{k+1} x_2)$, hence $u$ has no
strong trace at the plane $\{t= 0\}$. On the other hand, by the construction
$\beta(u)$ is a solution of the same transport equation for any $\beta \in C(\R)$, hence the renormalization
property is satisfied, which implies uniqueness. 

Also related to strong traces for deterministic transport equations, we would like 
to thank an anonymous referee for bringing to our attention the following 
result in \cite{FEFFERMAN}. Adopting the setting of weak measure-value solutions, there 
exists a continuous divergence-free drift $b \in C_c([0,\infty) \times \R^2)$, and a 
time-dependent measure $\eta(t)$, such that the Hausdorff dimension of 
${\rm supp}\  \eta(t)> 1$, for each $t> 0$, but  the Hausdorff dimension of 
${\rm supp}\  \eta(0)= 0$. Therefore, the strong trace at the plane $\{t= 0\}$ is not 
achieved in this class of solutions, since we expect that, by the transport 
property of the equation, the support of the solutions ($t> 0$ fixed) 
should be maintained at the plane $\{t= 0\}$. 

\medskip
Let us now focus on the stochastic case. First, Funaki
in \cite{Fu} studied  the random transport equation 
in bounded domains with regular coefficients.
To the knowledge of the authors, nothing has already been done 
for stochastic transport equations in bounded 
domains for low regularity coefficients. 
Actually, different from the deterministic setting, we could not 
use the idea of the influx zone, where the boundary data 
is prescribed. The solutions to \eqref{trasport} 
will be constructed via the idea of stopped backward process,
see \eqref{SRP}, which was used in \cite{Fu} and well explored
by Constantin, Iyer \cite{PCGI}, related to Navier-Stokes equations
in domains with boundaries, where the velocity vector
field has Lipschitz regularity.

\medskip
In this article, we deal with the problem \eqref{trasport} and show the 
existence and uniqueness of  weak  $L^{\infty}$-solutions for Dirichlet data. 
The initial-boundary value problem is much harder to solve than the Cauchy one, 
%hence we exploit in this paper new improvements due the 
%perturbation of the drift vector field by a Brownian motion.
for instance, the solvability in the weak sense for the Cauchy problem is easily established 
under the mild assumption of local integrability for $b$ and $\dive b$, see \cite{WNCO2}.
On the other hand, the existence result established here on
bounded domains relies strongly on the 
strong stochastic trace result obtained in Section \ref{TRACE}, that is to say, the trace of
a distributional solution $u$ of \eqref{trasport} 
is a function $\gamma u \in L^\infty([0,T] \times \Gamma \times \Omega)$
(see Definition \ref{deftrace}).
This result of strong traces does no follow (necessarily) 
in the deterministic case, we recall from \cite{Bo}, ($p= \infty$), that 
$\gamma u \in L^\infty([0,T] \times \Gamma; |\mu|)$, i.e. in weak sense 
($\gamma u$ in duality with $b \cdot \mathbf{n}$).
%i.e. weak trace. The subsets 
%of $\Gamma$ where $\gamma u$ behave badly are compensated by 
%the density $b \cdot \mathbf{n}$.

%It is also essential 
We have also used to prove the existence of weak solutions, the passage from the 
Stratonovich formulation \eqref{IBVPEXT} into It\^o's one \eqref{IBVPEXTIto}, which is a 
completely new result. 
% We stress that working with $BV$ regularity for the drift (commutators framework), 
%an important improvement introduced by the noise is the strong stochastic trace.
%Recall that in the deterministic case there does not 
%existe strong trace results for weak solutions to transport equations. 
%We prove in Section \ref{TRACE} a strong trace result, i.e. the trace of
%a distributional solution $u$ of \eqref{trasport} 
%is a function $\gamma u \in L^\infty([0,T] \times \Gamma \times \Omega)$
%(see Definition \ref{deftrace}).

The uniqueness of weak solutions obtained in this paper does not assume
that the divergence of $b$ is bounded (we have just assumed a boundedness 
from above). Moreover, we only consider a boundedness of $b$ with 
respect to the spatial variable, see \eqref{BCONDUNQ}.

%%%%%%%%%%%%%%%%%%%
\section{Existence of Weak Solutions} 
\label{CLASSICALSOL}
%%%%%%%%%%%%%%%%%%%

The main issue in this section is to establish the solvability of system \eqref{trasport}.
We shall assume that $b$ satisfies \eqref{con1}, \eqref{BCONDUNQ}, otherwise 
mentioned explicitly. 

\subsection{Preliminares and Background}
\label{preliminares}

$\circ$ {\bf Weak solutions for regular coefficients} 
\medskip

To begin, let us consider the random differential equation in $\R^d$, that is to say,
given $s \in [0,T ]$  and  $x \in \mathbb{R}^{d}$, we consider 
\begin{equation}\label{itoass}
X_{s,t}(x)= x + \int_{s}^{t}   b(t', X_{s,t'}(x)) \ dt'  +  B_{t}-B_{s},
\end{equation}
where $X_{s,t}(x)= X(s,t,x)$ (also $X_{t}(x)= X(0,t,x)$).
In particular, for $m \in \N$ and $0 < \alpha < 1$, we assume
\begin{equation}
\label{DRIFTREGULAR}
   b \in L^1((0,T); (C^{m,\alpha}(\R^d; \R^d)). 
\end{equation}
It is well known that, under the above regularity 
of the drift vector field $b$, the 
stochastic flow $X_{s,t}$ is a $C^m$ diffeomorphism
(see for example  \cite{Ku2,Ku}). Moreover, the 
inverse $Y_{s,t}:=X_{s,t}^{-1}$ 
satisfies the following backward stochastic
differential equations,
\begin{equation}\label{itoassBac}
Y_{s,t}= y - \int_{s}^{t}   b(t', Y_{t',t}) \ dt'  - (B_{t}-B_{s}),
\end{equation}
for $0\leq s\leq t$. Usually, $Y$ is called the time
reversed process of $X$. 
Then, given $(t,x) \in U_T$ and the time
reversed process $Y_{s,t}$, we consider the set
$
    S= \{ s \in [0,t] / \, Y(s,t,x) \notin U_T \}
$
and define 
\begin{equation}
\label{TAU}
  \kappa(t,x,\omega):= \sup S. 
\end{equation}  
Clearly $S$ could be an empty set, and in this case we set $\kappa= 0$.

To follow, we define $\bar{Y}_{s,t}$ on $\bar{U}$ as
\begin{equation}
\label{SRP}
    \bar{Y}_{s,t}(x):= Y_{s,t}(x) \quad \text{for $s \in [ \kappa, t]$},
\end{equation}
which is called a stopped backward process.
Moreover, we define for each $(t,x) \in U_T$,
the stochastic influx boundary zone, which is to say
$$
  \Gamma^{\rm{in}}(\omega):=
  \big\{ \bar{Y}_{\kappa,t}(x) ; \; \kappa(t,x)> 0 \big\},
$$
and for convenience $\Gamma^{\rm o}:= \Gamma \setminus \Gamma^{\rm {in}}$. 
Finally, we set 
\begin{equation}
   \begin{aligned}
    \mathbf{n^i}&= (- \chi_{\Gamma^{\rm{in}}}) \; \mathbf{n},
    \quad \text{and} \quad
    \mathbf{n^o}&= (1-\chi_{\Gamma^{\rm{in}}}) \; \mathbf{n}.
   \end{aligned}
\end{equation}

From the above considerations, 
we may apply a straightforward computation (see conjointly  Funaki \cite{Fu}, Theorem 3.1) to prove the following  

\begin{lemma}\label{lemaexis} 
For $m\geq 3$, $0< \alpha < 1$, let $u_{0} \in C^{m,\alpha}(\ol{U})$, $u_b \in C^{m,\alpha}(\ol{\Gamma_T})$ 
be respectively initial, boundary data satisfying compatibility conditions,
and  assume \eqref{DRIFTREGULAR}.
Then, the IBVP problem \eqref{trasport}
has a weak (regular-coefficients) $L^\infty$-solution $u(t,.)$ for $0\leq t\leq T$, given 
by
\begin{equation}\label{repre}
u(t,x):=
\left \{
\begin{aligned}
       {u}_{0}(\bar{Y}_{\kappa,t}(x)), & \quad \text{if  $\kappa(t,x)= 0$},
\\[5pt]
       {u}_{b}(\kappa, \bar{Y}_{\kappa,t}(x)), & \quad \text{if  $\kappa(t,x)> 0$},
\end{aligned}
\right.
\end{equation}
which satisfies: 
For each test function $\varphi \in C_c^{\infty}(\R^d)$, the real value process $\int_{U}  
u(t,x)\varphi(x) dx$ has a continuous modification which is a
$\mathcal{F}_{t}$-semimartingale, and for all $t \in [0,T]$, we have $\mathbb{P}$-almost
sure
\begin{equation} 
\label{IBVPEXTREG}
     \begin{aligned}
      \int_U  & u(t,x) \varphi(x) dx= \int_{U} u_{0}(x)\varphi(x) \ dx
      +\int_{0}^{t} \!\! \int_{U}u(s,x) \, b^j(s,x)  \, \partial_j \varphi(x) \ dx ds
\\      
        &+ \int_{0}^{t}\!\!  \int_{U} u(s,x) \, \dive \, b(s,x)\,  \varphi(x) \ dx ds 
           -  \int_{0}^{t} \!\! \int_{\Gamma} \gamma u(s,r) \, \varphi(r) \, b^j \mathbf{n}_{\!j} \, dr ds
\\[5pt]
%     &+ \int_{0}^{t} \!\! \int_{\Gamma} u_b(s,r) \, \varphi(r) \ b^j \mathbf{n^i}_{\!\!j} \, dr ds
     &-\int_{0}^{t} \!\! \int_{\Gamma}   \gamma u(s,r) \, \varphi(r)  \, \mathbf{n}_{\!j} \ dr \circ dB_{s}^{j}
 %       dB_{s}^{j}
%\\[5pt]
%    &+\int_{0}^{t} \int_{\Gamma} u_b(s,r) \ \varphi(r) \ \mathbf{n^i}_{\!\!j}\ dr \circ dB_{s}^{j}
   + \int_{0}^{t} \!\! \int_{U} u(s,x)  \; \partial_j \varphi(x) \ dx {\circ}dB_{s}^{j},
   \end{aligned}
\end{equation}
where $ \gamma u \, \mathbf{n}= u_\mathbf{o} \, \mathbf{n^o} - u_b \, \mathbf{n^i}$.
\end{lemma}

\begin{remark}
\label{REMTRACEREGCOEFF}
One remarks that, the trace of the solutions 
$u(t,x)$ defined by \eqref{repre}, say $\gamma u$,
makes sense (see Lemma 4.3 in \cite{Fu}).
Moreover, the boundary data $u_b$ is assumed in \eqref{IBVPEXTREG} 
just on the stochastic influx boundary zone.
\end{remark}

\bigskip
$\circ$ {\bf Distributional  solution} 
\medskip

We begin considering in which sense 
a function $u\in L^{\infty}(U_T \times \Omega)$
is a distributional solution to problem \eqref{trasport},
more precisely we have the following 

\begin{definition}\label{defisoldistri} Let $u_0 \in L^\infty(U)$ 
be given. A stochastic process
$u\in L^{\infty}(U_T \times \Omega)$ is called 
a distributional  $L^{\infty}-$solution of the IBVP \eqref{trasport},
when for each test function $\varphi \in C_c^{\infty}(U)$, the real value process $\int_{U}  
u(t,x)\varphi(x) dx$ has a continuous modification which is a
$\mathcal{F}_{t}$-semimartingale, and for all $t \in [0,T]$, we have $\mathbb{P}$-almost
sure
\begin{equation} \label{DISTINTSTR}
\begin{aligned}
    \int_U u(t,x) \varphi(x) dx &= \int_{U} u_{0}(x) \varphi(x) \ dx
   +\int_{0}^{t} \!\! \int_{U} u(s,x) \ b^i(s,x) \partial_{x_i} \varphi(x) \ dx ds
\\[5pt]
   &\; + \int_{0}^{t} \!\! \int_{U} u(s,x) \, \dive \,b(s,x) \, \varphi(x) \ dx ds 
\\[5pt]
    &\;  + \int_{0}^{t} \!\! \int_{U} u(s,x) \ \partial_{x_i} \varphi(x) \ dx \, {\circ}{dB^i_s}.
\end{aligned}
\end{equation}
\end{definition}

\begin{remark}\label{lemmaito1}  Since distributional solutions and Cauchy problem
can be treated equivalently, 
following Flandoli, Gubinelli, Priola \cite{FGP2}, see Lemma 13, 
we can reformulate equation \eqref{DISTINTSTR} in
It\^o's form as follows: A  stochastic process $u\in L^{\infty}(U_T \times \Omega)$
is  a distributional  $L^{\infty}$ solution  of the SPDE
\eqref{trasport} if, and only if, for every test function
  $\varphi \in C_c^{\infty}(U)$, the process $\int u(t,
  x)\varphi(x)   dx$ has a continuous modification, which is a
$\mathcal{F}_{t}$-semimartingale, and satisfies the following It\^o's formulation 
for all $t \in [0,T]$
\begin{equation} \label{DISTINTITO}
\begin{aligned}
    \int_U u(t,x) &\varphi(x) dx= \int_{U} u_{0}(x) \varphi(x) \ dx
   +\int_{0}^{t} \!\! \int_{U} u(s,x) \ b^i(s,x) \partial_{x_i} \varphi(x) \ dx ds
\\[5pt]
   &+ \int_{0}^{t} \!\! \int_{U} u(s,x)\,  \dive \, b(s,x) \, \varphi(x) \ dx ds 
\\[5pt]
    &+ \int_{0}^{t} \!\! \int_{U} u(s,x) \ \partial_{x_i} \varphi(x) \ dx \stackrel{}{dB^i_s}
    + \frac{1}{2} \int_{0}^{t} \!\!\int_{U} u(s,x) \Delta \varphi(x) \ dx ds.
\end{aligned}
\end{equation}
\end{remark}

\begin{lemma}\label{lemmaexis1}  Under condition \eqref{con1}, \eqref{BCONDUNQ},
there exits a distributional $L^{\infty}$ solution $u$ of the stochastic IBVP
\eqref{trasport}.
\end{lemma}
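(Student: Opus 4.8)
The plan is a coefficient‑regularization argument: smooth out the drift, solve the resulting regular IBVP by Lemma \ref{lemaexis}, extract a weak‑$*$ limit in $L^{\infty}$, and identify the limit as a distributional solution. The decisive structural observation is that \eqref{DISTINTSTR} only tests against $\varphi \in C_c^{\infty}(U)$, so the boundary data never appears explicitly in the identity to be proved, and that in the It\^o form \eqref{DISTINTITO} the stochastic term is \emph{linear} in $u$; I would therefore carry out the limit in the It\^o formulation and use Remark \ref{lemmaito1} to return to Stratonovich form at the end.

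First I would regularize. Fix $\varrho>0$ with $\bar U \subset B_\varrho(0)$, a cut‑off $\chi \in C_c^{\infty}(\R^d)$ with $\chi\equiv 1$ on $B_\varrho$, and set $b^{\varepsilon}:=(\chi b)*\rho_{\varepsilon}$ (spatial mollification). Since $\chi b \in L^{1}((0,T);BV(\R^d))$ has compact support, $b^{\varepsilon}\in L^{1}((0,T);C^{m,\alpha}(\R^d))$ for every $m$, so \eqref{REGULCLASS} holds; moreover $b^{\varepsilon}\to b$ and $\dive b^{\varepsilon}=(\chi\,\dive b+b\cdot\nabla\chi)*\rho_{\varepsilon}\to\dive b$ in $L^{1}_{\loc}$ on a neighbourhood of $\bar U$, and replacing $b$ by $\chi b$ outside $U$ does not affect \eqref{DISTINTSTR}. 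Likewise I would pick $u_0^{\varepsilon}\in C^{m,\alpha}(\bar U)$, $u_b^{\varepsilon}\in C^{m,\alpha}(\Gamma_T^-)$ with $\|u_0^{\varepsilon}\|_{L^{\infty}(U)}\le\|u_0\|_{L^{\infty}(U)}$, $\|u_b^{\varepsilon}\|_{L^{\infty}(\Gamma_T^-)}\le\|u_b\|_{L^{\infty}(\Gamma_T;\mu^-)}$, satisfying the compatibility conditions of Lemma \ref{lemaexis} (a routine adjustment near the corner $\{0\}\times\Gamma$) and converging to $u_0,u_b$ in $L^1(U)$, resp.\ $L^1(\Gamma_T^-;\mu^-)$.

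Applying Lemma \ref{lemaexis} to the regularized data gives for each $\varepsilon$ a weak‑regular solution $u^{\varepsilon}$, given by \eqref{repre}; since the values of $u^{\varepsilon}$ lie among those of $u_0^{\varepsilon}$ and $u_b^{\varepsilon}$, we get the uniform bound $\|u^{\varepsilon}\|_{L^{\infty}(U_T\times\Omega)}\le R:=\max(\|u_0\|_{L^{\infty}},\|u_b\|_{L^{\infty}})$, hence $u^{\varepsilon}\rightharpoonup u$ weak‑$*$ in $L^{\infty}(U_T\times\Omega)$ along a subsequence, with $\|u\|_{L^{\infty}}\le R$. As $u^{\varepsilon}$ is a smooth‑coefficient solution, the It\^o–Stratonovich correction (equivalently the $b^{\varepsilon}$‑version of Remark \ref{lemmaito1}) shows $u^{\varepsilon}$ satisfies \eqref{DISTINTITO} with $b$ replaced by $b^{\varepsilon}$. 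I would then test against $\eta\in L^{\infty}(\Omega)$ (and, if convenient, a time weight) and pass to the limit term by term: the drift terms converge because $b^{\varepsilon}_i\partial_i\varphi$ and $(\dive b^{\varepsilon})\varphi$ converge strongly in $L^{1}$ while $u^{\varepsilon}$ converges weak‑$*$; the Laplacian correction is linear in $u^{\varepsilon}$, hence converges; for $\int_0^t\!\big(\int_U u^{\varepsilon}\partial_i\varphi\,dx\big)dB^i_s$ the scalar adapted integrands $h^{\varepsilon}_s:=\int_U u^{\varepsilon}(s,\cdot)\partial_i\varphi$ are uniformly bounded and converge weakly in $L^2(\Omega\times(0,T))$ to $h_s:=\int_U u(s,\cdot)\partial_i\varphi$ (the weak limit stays adapted, a closed condition), so by the It\^o isometry the corresponding integrals converge weakly in $L^2(\Omega)$. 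This yields \eqref{DISTINTITO} for $u$; its right‑hand side furnishes the required continuous $\mathcal{F}_t$‑semimartingale modification of $\int_U u(t,\cdot)\varphi$, and Remark \ref{lemmaito1} converts \eqref{DISTINTITO} back to \eqref{DISTINTSTR}, so $u$ is the desired distributional $L^{\infty}$‑solution.

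The main obstacle is precisely the stochastic term: weak‑$*$ convergence of $u^{\varepsilon}$ cannot pass to the limit inside products, so the proof must be set up so that every factor multiplying $u^{\varepsilon}$ either converges strongly in $L^1$ (the drift terms) or enters linearly (the It\^o integral, where the isometry does the job) — this is the reason for working in the It\^o rather than the Stratonovich formulation. A secondary, purely technical point is upgrading the identity from "for a.e.\ $t$, $\mathbb{P}$‑a.s." to "for all $t$, $\mathbb{P}$‑a.s.", which is handled by the continuity in $t$ of the limiting right‑hand side of \eqref{DISTINTITO}.
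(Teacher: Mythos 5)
Your proposal is correct and follows essentially the same route as the paper: regularize the drift and the data, construct $u^{\varepsilon}$ via Lemma \ref{lemaexis}, extract a weak-$*$ limit from the uniform $L^{\infty}$ bound, and pass to the limit in the It\^o formulation \eqref{transintegralR2}, using the linearity of the stochastic term together with the weak continuity of the It\^o integral in $L^{2}((0,T)\times\Omega)$ and the fact that adaptedness is preserved under weak limits. Your write-up simply makes explicit some details (the cut-off, the term-by-term convergence of the drift terms, the upgrade to ``for all $t$'') that the paper leaves implicit.
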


The proof of the above lemma follows the same arguments,
with minor modifications, as the one for the Cauchy problem,
see Lemma 2.1 in Neves, Olivera \cite{WNCO2}.

%%%%%%%%%%%%%%%%%%%%%%%%%%%%%%%%%%%
\subsection{Strong Stochastic Trace} 
\label{TRACE}
%%%%%%%%%%%%%%%%%%%%%%%%%%%%%%%%%%%

Now we prove the existence and uniqueness of the strong stochastic trace by the existence of distributional 
$L^{\infty}-$solution of the IBVP \eqref{trasport}. 

 \begin{definition}\label{deftrace}  Let $u$ be a  distributional  $L^{\infty}$-solution of the IBVP problem \eqref{trasport}.  
 A stochastic process
$\gamma u \in L^{\infty}([0,T] \times \Gamma \times \Omega)$ is called 
the stochastic trace of the distributional  solution $u$, if for each test function $\varphi \in C_c^{\infty}(\R^d)$, $\int_{\Gamma}  
\gamma u(t,r) \varphi(r) dr$ is an adapted real value process,
which satisfies for any $\beta \in C^{2}(\mathbb{R})$
and all $t \in [0,T]$
\begin{equation}
\label{STOCTRACE}
    \begin{aligned}
    \int_U \beta( u(t,x)) \; \varphi(x) \ dx&= \int_{U} \beta(u_{0}(x)) \; \varphi(x) \ dx
\\[5pt]
   &+\int_{0}^{t} \!\! \int_{U} \beta(u(s,x)) \, b(s,x) \cdot  \nabla \varphi(x) \ dx ds  
   \\[5pt]
  & + \int_{0}^{t}\!\! \int_{U} \beta (u(s,x)) \, \dive \, b(s,x)\,  \varphi(x) \ dx ds
\\[5pt]
  &-  \int_{0}^{t} \!\! \int_{\Gamma} \beta (\gamma u) \, \varphi(r) \, b(s,r)  \cdot \mathbf{n}(r)  \ dr ds
\\[5pt]
   &+ \int_{0}^{t} \int_{U} \beta (u(s,x)) \,  \partial_{x_{i}} \varphi(x) \ dx \,  {\circ}dB_{s}^{i}  
\\[5pt]
  &-  \int_{0}^{t} \!\! \int_{\Gamma} \beta (\gamma u) \, \varphi(r) \,  \mathbf{n}_i(r) \ dr \, {\circ}dB_{s}^{i}.
   \end{aligned}
\end{equation}
\end{definition}

\begin{theorem}\label{PROPtrace}  Assume condition \eqref{con1}, and 
 let $u$ be a  distributional  $L^{\infty}$-solution of the IBVP problem \eqref{trasport}.  
 Then, there exits a unique  stochastic trace $ \gamma u$. 
\end{theorem}

\begin{proof} 1.  Let $u$ be a distributional solution of the transport equation
\eqref{trasport}, and for each $\ve> 0$ set 
$u_{\varepsilon}(t,\cdot)$ the global approximation of $u$ related to the standard mollifier 
$\rho_\ve$ (see Appendix). Let $\psi \in C^\infty_c(U)$ be a positive function,
and consider for any fixed $y \in U$ 
$$
   \varphi(x)= \psi(y) \, \rho_\ve(y + \lambda \, \ve \nabla h(y) - x),
$$   
hence $\varphi$ vanishes on the boundary $\Gamma$. 
Then, we take conveniently $\varphi$ as a test function in \eqref{DISTINTSTR} to obtain 
$$
\begin{aligned}
    u_{\varepsilon}(t,y) &= (u_0\ast_{\mathbf{n}} \rho_\ve)(y)
\\[5pt]
   &+ \int_{0}^{t}\!\! \int_U u(s,z) \,  b(s,z) \cdot \nabla  \rho_{\varepsilon}(y^\ve - z) \ dz ds 
\\[5pt]
  &+  \int_{0}^{t} \!\! \int_U u(s,z) \, \dive \, b(s,z) \, \rho_{\varepsilon}(y^\ve-z) \ dz ds  
\\[5pt]
  &+  \int_{0}^{t} \!\! \int_U u(s,z) \, \partial_{i} \rho_{\varepsilon}(y^\ve-z) \, dz \, {\circ}dB_{s}^{i}.
\end{aligned}
$$
Let $\beta \in C^2(\R)$, and applying  It\^o-Ventzel-Kunita
Formula (see Appendix),  we obtain from the above equation  
$$
  \begin{aligned}
  \beta( u_{\varepsilon}(t,x)) &= \beta(u_0\ast_{\mathbf{n}}  \rho_\ve)(x) 
\\[5pt]
   & +\int_{0}^{t}  \beta^{\prime}(u_\varepsilon(s,x)) \int_U u(s,z) \,  b(s,z) \cdot \nabla \, \rho_{\varepsilon}(x^\ve-z) \ dz ds 
\\[5pt]
  & + \int_{0}^{t} \beta^{\prime}(u_\varepsilon(s,x)) \int_U u(s,z) \, \dive b(s,z) \, \rho_{\varepsilon}(x^\ve-z)\ dz  ds 
\\[5pt]
   &+  \int_{0}^{t}  \beta^{\prime}(u_\varepsilon(s,x))  
   \int_U u(s,z) \,  \partial_{i} \rho_{\varepsilon}(x^\ve-z) dz \,  { \circ}dB_{s}^{i}.
   \end{aligned}
$$   
Following the renormalization procedure, nowadays well known, 
we obtain from an algebraic manipulation 
\begin{equation}
\label{au}
  \begin{aligned}
  \beta( u_{\varepsilon}(t,x)) &- \beta(u_0\ast_{\mathbf{n}} \rho_\ve)(x) 
\\[5pt]
    &+\int_{0}^{t}  b(s,x) \cdot \nabla \beta(u_\ve(s,x)) \ ds 
%\\[5pt]
%  & + \int_{0}^{t} \beta^{\prime}(u_\varepsilon(s,x)) \int_U u(s,z) \, \dive b(s,z) \, \rho_{\varepsilon}(x^\ve-z)\ dz  ds 
%\\[5pt]
   +  \int_{0}^{t}  \partial_{i}\beta(u_\varepsilon(s,x))  
     \,  { \circ}dB_{s}^{i}
     \\[5pt]
     &= \int_{0}^{t} \beta^\prime(u_\varepsilon(s,x)) \, \mathcal{R}_{\varepsilon}(b,u) ds +
         \int_{0}^{t} \partial_i\beta(u_\varepsilon(s,x)   \mathcal{P}_{\varepsilon}(u)  \circ dB_{s}^{i},
   \end{aligned}
\end{equation}
where $\mathcal{R}_{\varepsilon}(b,u)$, $ \mathcal{P}_{\varepsilon}(u)$ are commutators type, defined respectively by
$$
   \begin{aligned}
    \mathcal{R}_{\varepsilon}(b,u)&:=(b\nabla ) (\rho_{\varepsilon}\ast_{\mathbf{n}} u )- \rho_{\varepsilon}\ast_{\mathbf{n}}((b\nabla)u),
\\[5pt]
    \mathcal{P}_{\varepsilon}(u)&:=\nabla  (\rho_{\varepsilon}\ast_{\mathbf{n}} u )- \rho_{\varepsilon}\ast_{\mathbf{n}}(\nabla u).
   \end{aligned}
$$ 
	
\medskip
2. Now, we show that $\{\beta(u^\ve)\}$ is a Cauchy sequence in $L^2([0,T] \times \Gamma \times \Omega)$.
For any $\varepsilon_1, \varepsilon_2> 0$, setting $w_{\varepsilon1,2}=  \beta(u_{\varepsilon_1}) 
-  \beta(u_{\varepsilon_2})$, we get from equation \eqref{au}
$$
%\label{auz}
  \begin{aligned}    
  w_{\varepsilon1,2}(t,x) &- w_{\varepsilon1,2}(0,x) 
\\[5pt]  
  & + \int_{0}^{t}  \ b(s,x) \cdot \nabla w_{\varepsilon1,2}(s,x) \ ds 
 + \int_{0}^{t} \partial_i w_{\varepsilon1,2}(s,x) \circ dB_{s}^{i}
 \\[5pt]
 &= \int_{0}^{t} \mathcal{R}_{\varepsilon1,2}(b,u) ds +  
 \int_{0}^{t}   \mathcal{P}_{\varepsilon1,2}(u)  \circ dB_{s}^{i}, 
   \end{aligned}
$$
where 
$$
      \mathcal{R}_{\varepsilon1,2}(b,u)=  \beta^\prime(u_{\varepsilon_1}) \, \mathcal{R}_{\varepsilon_1}(b,u)  
      -\beta^\prime(u_{\varepsilon_2}) \, \mathcal{R}_{\varepsilon_2}(b,u),
$$  
and
$$
      \mathcal{P}_{\varepsilon1,2}(u)=  \beta^\prime(u_{\varepsilon_1}) \, \mathcal{P}_{\varepsilon_1}(u)  
      -\beta^\prime(u_{\varepsilon_2}) \, \mathcal{P}_{\varepsilon_2}(bu).
$$  
Similarly to item 1, we apply in the above equation the It\^o-Ventzel-Kunita
Formula, now for $\beta(z)= z^2$. Then, we obtain 
$$
  \begin{aligned}    
  |w_{\varepsilon1,2}(t,x)|^2 &- |w_{\varepsilon1,2}(0,x)|^2 
  \\[5pt]
  &+ \int_{0}^{t}  \ b(s,x) \cdot \nabla w^2_{\varepsilon1,2}(s,x) \ ds \ 
	+  \int_{0}^{t} \partial_i w^2_{\varepsilon1,2}(s,x) \ {\circ}dB_{s}^{i}
\\[5pt]
 & =  2 \int_{0}^{t} w_{\varepsilon1,2} \,  \mathcal{R}_{\varepsilon1,2}(b,u) ds +
2 \int_{0}^{t} w_{\varepsilon1,2} \,  \mathcal{P}_{\varepsilon1,2}(u) \  {\circ}dB_{s}^{i}.
   \end{aligned}
$$
Then, we multiply the above equation by a test function $\varphi\in C_c^{\infty}(\R^d) $, and integrating in $U$, we obtain
 $$
  \begin{aligned}    
  &\int_U |w_{\varepsilon1,2}(t,x)|^2 \, \varphi(x) \ dx 
   -\int_U  |w_{\varepsilon1,2}(0,x)|^2 \, \varphi(x) \ dx  
 \\[5pt]
 &  -   \int_{0}^{t}\!\! \int_U  w^2_{\varepsilon1,2}(s,x)  \ b(s,x) \cdot \nabla \varphi(x)  \ dx ds 
     -   \int_{0}^{t}\!\! \int_U  w^2_{\varepsilon1,2}(s,x) \,  \dive b(s,x) \, \varphi(x)  \ dx ds 
\\[5pt]
 & - \int_{0}^{t} \!\! \int_U  w^2_{\varepsilon1,2}(s,x) \,  \partial_i \varphi(x) \ dx \circ dB_{s}^{i}
   \\[5pt]
  &+   \int_{0}^{t} \int_{\Gamma} w^2_{\varepsilon1,2}(s,r)  \;  b(s,r)  \cdot \mathbf{n}(r)  \, \varphi(r)   \ dr ds 
   +  \int_{0}^{t} \!\! \int_{\Gamma} w^2_{\varepsilon1,2}(s,r)  \;  \mathbf{n}_i(r) \varphi(r)   \ dr {\circ}  dB_{s}^{i}
\\[5pt]
& = 2 \int_{0}^{t}  \!\! \int_U  w_{\varepsilon1,2} \,  \mathcal{R}_{\varepsilon1,2}(b,u) \, \varphi(x) \, dx ds
+ 2 \int_{0}^{t}  \!\! \int_U  w_{\varepsilon1,2} \,  \mathcal{P}_{\varepsilon1,2}(u) \, \varphi(x) \ dx  \ {\circ} dB_{s}^{i}
   \end{aligned}
$$
and taking covariation with respect to $B^j$, we have for each $i= 1, \ldots, d$, 
\begin{equation}
\label{EQCOVC}
\begin{aligned}
  & [\int_U |w_{\varepsilon1,2}(t,x)|^2 \, \varphi(x) \ dx, B_i] - \int_{0}^{t} \!\! \int_U  w^2_{\varepsilon1,2}(s,x) \,  \partial_i \varphi(x) \, dx  ds
\\[5pt]
&+	\int_{0}^{t} \!\! \int_{\Gamma} w^2_{\varepsilon1,2}(s,x)  \;  \mathbf{n}_i(r) \varphi(r)   \ dr   ds= 2 \int_{0}^{t}  \!\! \int_U  w_{\varepsilon1,2} 
\,  \mathcal{P}_{\varepsilon1,2}(u) \, \varphi(x) \, dx  ds.
\end{aligned}
\end{equation}
Moreover, taking the expectation
$$
\begin{aligned}
	\int_{0}^{t} \!\! \int_{\Gamma}  &\mathbb{E} | w_{\varepsilon1,2}(s,x) |^2  \;  \mathbf{n}_i(r) \varphi(r)   \ dr   ds= -\mathbb{E}   [\int_U |w_{\varepsilon1,2}(t,x)|^2 \, \varphi(x) \ dx, B^i] 
\\[5pt]
&+  \int_{0}^{t} \!\! \int_U  \mathbb{E} |w_{\varepsilon1,2}(s,x) |^2 \,  \partial_i \varphi(x) \, dx  ds 
    + 2  \int_{0}^{t}  \!\! \int_U \mathbb{E}  [w_{\varepsilon1,2} \,  \mathcal{P}_{\varepsilon1,2}(u)] \, \varphi(x) \, dx ds,
\end{aligned}	
$$
and also $\vp(x)= \partial_i h(x)$ (see Appendix), we obtain
$$
\begin{aligned}
	\int_{0}^{t} \!\! \int_{\Gamma}  \mathbb{E} | &w_{\varepsilon1,2}(s,x) |^2   \ dr   ds= \mathbb{E}   [\int_U |w_{\varepsilon1,2}(t,x)|^2 \, \partial_i h(x) \ dx, B^i] 
\\[5pt]
&-  \int_{0}^{t} \!\! \int_U  \mathbb{E} |w_{\varepsilon1,2}(s,x) |^2 \,  \Delta h(x) \, dx  ds 
\\[5pt]
    &- 2  \sum_{i= 1}^d \int_{0}^{t}  \!\! \int_U \mathbb{E}  [w_{\varepsilon1,2} \,  \mathcal{P}_{\varepsilon1,2}(u)] \, \partial_i h(x) \, dx ds.
\end{aligned}	
$$
Since $\beta(u^\ve)$ is uniformly bounded, converges to $\beta(u)$ in $L^2([0,T] \times U \times \Omega)$, 
and $\mathcal{P}_{\varepsilon1,2}(u)$ converges to zero in $L^1$ (see similar results in \cite{DL}, and \cite{Misch}), 
it follows that 
${\{\beta(u^\ve)\}}_{\ve>0}$ is a Cauchy sequence in 
$L^2([0, T]\times  \Gamma \times \Omega)$. Then, there exists  
$\tilde{\gamma} \in L^2([0, T]\times  \Gamma \times \Omega)$,
such that  $\beta(u^\ve)$ converges 
to  $\tilde{\gamma}$ as $\ve \to 0$. 
In particular, taking $\beta(u)= u$, there exists a subsequence  of $u^\ve$,  which converges  almost sure on 
$[0,T] \times \Gamma \times \Omega$, which limit we denote by $ \gamma u$. We observe that 
$\int  \gamma u(t,r)  dr $ is adapted since is the limit of adapted process. 
	
\medskip
3.  Now, we show that $\gamma u \in L^{\infty}([0,T] \times \Gamma \times \Omega)$, and also \eqref{STOCTRACE}.  
We denote $M= \|u\|_\infty$ and consider a non-negative $\beta$ such that 
 $\beta(u)=0$ in $[-M,M]$. Multiplying  \eqref{au}
by a test function $\vp \in C^\infty_c(\R^d)$, and after integration in $U$ we obtain
 \begin{equation}
 \label{STOCTRACEREG}
  \begin{aligned}    
  &\int_U \beta(u_{\varepsilon}(t,x)) \, \varphi(x) \ dx 
   -\int_U \beta(u_{\varepsilon}(0,x)) \, \varphi(x) \ dx  
 \\[5pt]
 &  -   \int_{0}^{t}\!\! \int_U  \beta(u_{\varepsilon}(s,x))  \ b(s,x) \cdot \nabla \varphi(x)  \ dx ds 
     -   \int_{0}^{t}\!\! \int_U  \beta(u_{\varepsilon}(s,x)) \,  \dive b(s,x) \, \varphi(x)  \ dx ds 
\\[5pt]
 & - \int_{0}^{t} \!\! \int_U  \beta(u_{\varepsilon}(s,x)) \,  \partial_i \varphi(x) \,dx \circ dB_{s}^{i}
   \\[5pt]
  &+   \int_{0}^{t} \int_{\Gamma} \beta(u_{\varepsilon}(s,x))  \;  b(s,r)  \cdot \mathbf{n}(r)  \, \varphi(r)   \ dr ds 
   +  \int_{0}^{t} \!\! \int_{\Gamma} \beta(u_{\varepsilon}(t,x)) \;  \mathbf{n}_i(r) \varphi(r)   \ dr \circ  dB_{s}^{i}
\\[5pt]
& =  \int_{0}^{t}  \!\! \int_U  \beta'(u_{\varepsilon}(t,x)) \,  \mathcal{R}_{\varepsilon}(b,u) \, \varphi(x) \, dx ds
+  \int_{0}^{t}  \!\! \int_U  \beta'(u_{\varepsilon}(t,x)) \,  \mathcal{P}_{\varepsilon}(u) \, \varphi(x) \, dx  dB_{s}^{i}.
   \end{aligned}
\end{equation}
Then, we pass to the limit as $\ve \to 0$, and similarly to \eqref{EQCOVC}, we take the covariation with respect to $B^j$, to obtain 
$$
\int_{0}^{t} \!\! \int_{\Gamma} \beta(\gamma u(s,r))  \;  \mathbf{n}_i(r) \varphi(r)   \ dr   ds= 0
$$
for each $i= 1, \ldots, d$, where we have used that $\beta(u)=0$ in $[-M,M]$. 
Therefore, taking $\vp(x)= \partial_i h(x)$ and since $\beta> 0$ in $\R \setminus [-M,M]$, it follows that 
$$
   \gamma u(t,r,\omega) \in [-M,M] \quad \text{almost sure in $[0,T] \times \Gamma \times \Omega$}.
$$
Similar procedure to \eqref{STOCTRACEREG} may be establish now for any $\beta \in C^2$, and then we are allowed to 
pass to the limit as $\ve \to$ to obtain \eqref{STOCTRACE}.

\medskip
4. Finally, we show the uniqueness of the trace. If  $ \gamma_1 u$ and  $ \gamma_2 u$
are two measurable and bounded functions satisfying \eqref{STOCTRACE}, then we have
for each test function $\vp \in C_{0}^\infty(\R^d)$ and $\beta$ the identity function 
$$
\begin{aligned}
  &\int_{0}^{t} \!\! \int_{\Gamma} \gamma_1 u \, \varphi(r) \, b(s,r)  \cdot \mathbf{n}(r)  \ dr ds
%\\[5pt]
 +  \int_{0}^{t} \!\! \int_{\Gamma} \gamma_1 u\, \varphi(r) \,  \mathbf{n}_i(r) \ dr \, {\circ}dB_{s}^{i}
\\[5pt]
  &=  \int_{0}^{t} \!\! \int_{\Gamma}  \gamma_2 u \, \varphi(r) \, b(s,r)  \cdot \mathbf{n}(r)  \ dr ds
%\\[5pt]
  + \int_{0}^{t} \!\! \int_{\Gamma} \gamma_2 u \, \varphi(r) \,  \mathbf{n}_i(r) \ dr \, {\circ}dB_{s}^{i}.
\end{aligned}
$$
Taking the covariation with respect to $B^i$,  we obtain for each $i= 1, \ldots, d$
$$
 \int_{0}^{t} \!\! \int_{\Gamma} \gamma_1 u \, \varphi(r) \,  \mathbf{n}_i(r) \ dr \, ds =  
  \int_{0}^{t} \!\! \int_{\Gamma} \gamma_2 u \, \varphi(r) \,  \mathbf{n}_i(r) \ dr \, ds,
$$
from which follows the uniqueness of the trace, and hence the thesis of the proposition.
\end{proof}

%%%%%%%%%%%%%%%%%%%%%%%%%%%%%
\subsection{Weak solutions for non-regular coefficients} 
\label{STEQ}
%%%%%%%%%%%%%%%%%%%%%%%%%%%%

In this section, we give the solvability of the stochastic initial-boundary value problem \eqref{trasport}
for measurable and bounded data. 
The great novelty here is the passage from Stratonovich to 
It\^o's formulation in bounded domains.
%, respectively equations \eqref{IBVPEXT} and \eqref{IBVPEXTIto}.

\begin{definition}
\label{defisoluIBPV} Let $u_0 \in L^\infty(U)$, 
$u_b \in L^\infty(\Gamma_T)$ be given. A stochastic process
$ u \in L^{\infty}(U_T \times \Omega)$
is called 
a  weak  $L^{\infty}-$solution of the  IBVP \eqref{trasport},
when for each test function $\varphi \in C_c^{\infty}(\R^d)$, the process $\int_{U}  u(t,
  x)\varphi(x)
  dx$ has a continuous modification which is a
$\mathcal{F}_{t}$-semimartingale, and for all $t \in [0,T]$, we have $\mathbb{P}$-almost sure
\begin{equation}
\label{IBVPEXT}
     \begin{aligned}
      \int_U  & u(t,x) \varphi(x) dx= \int_{U} u_{0}(x)\varphi(x) \ dx
      +\int_{0}^{t} \!\! \int_{U}u(s,x) \, b^j(s,x)  \, \partial_j \varphi(x) \ dx ds
\\      
        &+ \int_{0}^{t}\!\!  \int_{U} u(s,x) \, \dive \, b(s,x)\,  \varphi(x) \ dx ds 
           -  \int_{0}^{t} \!\! \int_{\Gamma} u_\mathbf{o}(s,r) \mathbf{n^o}_{\!\!j} \ b^j(s,r) \ \varphi(r) \, dr ds
\\[5pt]
     &+ \int_{0}^{t} \!\! \int_{\Gamma} u_b(s,r) \mathbf{n^i}_{\!\!j} \, b^j(s,r) \ \varphi(r)  \ dr ds
     -\int_{0}^{t} \!\! \int_{\Gamma}  u_\mathbf{o}(s,r) \mathbf{n^o}_{\!\!j} \, \varphi(r)  \ d{r}\,  {\circ}dB_{s}^{j}
\\[5pt]
    &+\int_{0}^{t} \int_{\Gamma} u_b(s,r) \ \mathbf{n^i}_{\!\!j} \, \varphi(r)  \ dr \circ
        dB_{s}^{j} 
        + \int_{0}^{t} \!\! \int_{U} u(s,x)  \; \partial_j \varphi(x) \ dx \, {\circ}dB_{s}^{j}.
   \end{aligned}
\end{equation}
\end{definition}

\begin{remark}
\label {stochastic influx}
Clearly, the term $\mathbf{n^i}$ (also $\mathbf{n^o}$) in \eqref{IBVPEXT} should be explained, 
since the stochastic influx boundary zone, that is 
$\Gamma^{\rm{in}}(\omega)$ was established for regular drift vector field $b$.  
Indeed, under the assumption that $b$ satisfies \eqref{con1}, \eqref{BCONDUNQ},
we may follow Fedrizzi, Flandoli see \cite{Fre1,Fre2}, and show
the $\alpha$-H\"{o}lder continuity of the stochastic flow  $X_{s,t}$,
for each $\alpha \in  (0, 1) $. Also that, it is a stochastic flow of homeomorphism.
Then, we may consider the inverse  $Y_{s,t}:=X_{s,t}^{-1}$, and
define $\kappa> 0$, $\bar{Y}$,  
and $\Gamma^{\rm{in}}(\omega)$
as introduced in Section \ref{preliminares}.
\end{remark}

For convenience we extend the weak solution $u \in L^{\infty}(U_T \times \Omega)$ by setting 
$$
   u(t,x,\omega) \equiv 0, \quad \text{for all $(t,x,\omega) \in 
   (\R \times U \times \Omega) \setminus (U_T \times \Omega)$}.
$$
Then, we consider the following main general existence result.
\begin{theorem}
\label{theoExis} Under condition \eqref{con1}, \eqref{BCONDUNQ}, 
there exits a weak $L^{\infty}-$solution $u \in L^{\infty}(U_T \times \Omega)$ of the IBVP 
\eqref{trasport}.
\end{theorem}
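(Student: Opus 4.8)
The plan is to construct the solution by the vanishing-regularization method, reusing the representative function $u^\epsilon$ from Lemma \ref{lemaexis} already introduced in the proof of Lemma \ref{lemmaexis1}. First I would fix standard mollifications $u_0^\epsilon$, $u_b^\epsilon$ of the data (satisfying the compatibility conditions) and $b^\epsilon$ of the drift, form the stochastic flow $X^\epsilon_{s,t}$ solving \eqref{itoass} with $b^\epsilon$, the stopped backward process $\bar Y^\epsilon$ as in \eqref{SRP}, and set $u^\epsilon$ by \eqref{reprereg}. By Lemma \ref{lemaexis}, written in the equivalent form \eqref{IBVPEXTREG}, each $u^\epsilon$ is a weak-regular-coefficients $L^\infty$-solution: for every $\varphi \in C_c^\infty(\R^d)$,
\begin{equation*}
\begin{aligned}
\int_U u^\epsilon(t,x)\varphi(x)\,dx &= \int_U u_0^\epsilon(x)\varphi(x)\,dx
+ \int_0^t\!\!\int_U u^\epsilon b^{\epsilon,j}\partial_j\varphi \,dx\,ds
+ \int_0^t\!\!\int_U u^\epsilon \,\dive b^\epsilon\,\varphi\,dx\,ds\\
&\quad - \int_0^t\!\!\int_\Gamma \gamma u^\epsilon\,\varphi\,(b^{\epsilon,j}\mathbf{n}_j)^+\,dr\,ds
+ \int_0^t\!\!\int_\Gamma u_b^\epsilon\,\varphi\,(b^{\epsilon,j}\mathbf{n}_j)^-\,dr\,ds\\
&\quad - \int_0^t\!\!\int_\Gamma \gamma u^\epsilon\,\varphi\,\mathbf{n}_j\,dr\circ dB_s^j
+ \int_0^t\!\!\int_U u^\epsilon\,\partial_j\varphi\,dx\circ dB_s^j,
\end{aligned}
\end{equation*}
where, by Remark 1 following Lemma \ref{lemaexis}, $\gamma u^\epsilon$ is just the (classical) boundary restriction of $u^\epsilon$, which is uniformly bounded up to the boundary by $\max\{\|u_0\|_\infty,\|u_b\|_\infty\}=:M$.

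The key uniform estimates are: $\|u^\epsilon\|_{L^\infty(U_T\times\Omega)}\le M$ from the explicit representation \eqref{reprereg}, and — crucially — a uniform bound on the traces $\gamma u^\epsilon$ in $L^\infty([0,T]\times\Gamma\times\Omega)$ (again $\le M$ by the remark). I would extract a subsequence with $u^\epsilon \rightharpoonup u$ weak-$\star$ in $L^\infty(U_T\times\Omega)$ (and weakly in $L^2$), and $\gamma u^\epsilon \rightharpoonup \chi$ weak-$\star$ in $L^\infty([0,T]\times\Gamma\times\Omega)$. Then I would pass to the limit term by term in the displayed identity: the interior deterministic terms converge by $b^\epsilon \to b$ in $L^1((0,T);L^1_{\loc})$ together with $u^\epsilon\rightharpoonup u$ (and $\dive b^\epsilon\to\dive b$ in $L^1_{\loc}$), the stochastic interior term by weak continuity of the It\^o integral in $L^2((0,T)\times\Omega)$ (as in the proof of Lemma \ref{lemmaexis1}, converting to It\^o form \eqref{DISTINTITO} to control the correction term $\tfrac12\int\!\!\int u^\epsilon\Delta\varphi$), and the three boundary terms by $\gamma u^\epsilon\rightharpoonup\chi$, $u_b^\epsilon\to u_b$ in $L^1(\Gamma_T;\mu^-)$, and $(b^{\epsilon,j}\mathbf{n}_j)^\pm\to(b^j\mathbf{n}_j)^\pm$ on $\Gamma_T$. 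This yields that the limit $u$ satisfies \eqref{IBVPEXT} with $\gamma u$ replaced by $\chi$. It remains to identify $\chi=\gamma u$ in the sense of Definition \ref{deftrace}: since $u$ is in particular a distributional $L^\infty$-solution (restrict \eqref{IBVPEXT} to $\varphi\in C_c^\infty(U)$, where the boundary terms drop out), Proposition \ref{PROPtrace} guarantees existence and uniqueness of its stochastic trace $\gamma u$; and $\chi$ inherits the defining relation \eqref{STOCTRACE} — for the identity function $\beta$ it follows by subtracting the $\varphi\in C_c^\infty(U)$ version of \eqref{IBVPEXT} from the full one, and for general $\beta\in C^2$ by renormalizing $u^\epsilon$ before passing to the limit, exactly as in steps 1–3 of the proof of Proposition \ref{PROPtrace}. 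By uniqueness of the trace, $\chi=\gamma u$, so $u$ is a weak $L^\infty$-solution.

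The main obstacle I anticipate is the passage to the limit in the \emph{boundary} terms — in particular verifying that $\gamma u^\epsilon \rightharpoonup \gamma u$ rather than to some spurious limit, and that the weak convergence survives multiplication by the (only $BV$, hence merely measurable on $\Gamma_T$) factors $(b^j\mathbf{n}_j)^\pm$. This is where the BV-trace machinery recalled in the Appendix, the Co-area and Area Formulas (as announced at the start of Section \ref{STEQ}), and the strong stochastic trace of Proposition \ref{PROPtrace} do the real work: one must show $\gamma u^\epsilon$ converges \emph{strongly} (not merely weakly) in $L^2([0,T]\times\Gamma\times\Omega)$, via the same commutator/renormalization Cauchy argument as in step 2 of the proof of Proposition \ref{PROPtrace} applied to the $u^\epsilon$, so that products with the bounded coefficients pass to the limit and the identification $\chi=\gamma u$ is automatic. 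The treatment of $\tau(t,x)$ and of the set $S$ in \eqref{TAU}–\eqref{SRP} — i.e.\ the measurability and the behavior of the hitting time as $\epsilon\to0$ — is a secondary technical point, handled as in \cite{Fu} and \cite{WN1}, but it does not affect the weak formulation since the representation \eqref{reprereg} is only used to produce the uniform $L^\infty$ bound and the validity of \eqref{IBVPEXTREG}.
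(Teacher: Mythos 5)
Your overall scaffolding --- regularize the data and the drift, use the representative function \eqref{reprereg} from Lemma \ref{lemaexis} to obtain the uniform $L^\infty$ bound and the regular-coefficient identity \eqref{IBVPEXTREG}, extract weak-$\star$ limits of $u^\epsilon$ and of its boundary restriction, and finally identify the limiting boundary function with the stochastic trace via Proposition \ref{PROPtrace} and a covariation argument --- matches steps 1, 3 and 4 of the paper's proof. But there is a genuine gap at the heart of the limit passage: you never explain how to pass to the limit in the \emph{boundary} Stratonovich integral $\int_0^t\!\int_\Gamma u^\epsilon(s,r)\varphi(r)\mathbf{n}_j(r)\,dr\circ dB_s^j$. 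The weak (or even strong $L^2$) continuity you invoke holds for the It\^o integral, not for the Stratonovich one; a Stratonovich integral carries the correction $\tfrac12[\,\cdot\,,B^j]_t$, and for the boundary term this covariation is \emph{not} supplied by \eqref{DISTINTITO}, which is the It\^o reformulation only for test functions compactly supported in $U$ (no boundary terms). Computing the joint quadratic variation $I_2$ in \eqref{Q2} requires identifying the martingale part of the real process $t\mapsto\int_\Gamma u(t,r)\varphi(r)\mathbf{n}_j(r)\,dr$, which is itself a boundary integral and is not directly readable from the weak formulation. This is precisely what step 2 of the paper's proof does, and it is the advertised core novelty of the existence result: one inserts the special test functions $\varphi(x)\,\partial_j\zeta_\mu(h(x))$ into \eqref{IBVPEXT1}, uses the Coarea and Area Formulas (Claims 1 and 2) to let $\mu\to 0$, and arrives at the It\^o form \eqref{IBVPEXTIto}, which contains two additional boundary terms absent from your limiting identity --- one involving $\nabla\varphi\cdot\mathbf{n}$ and one involving the mean curvature $H$ of $\Gamma$. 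Without this computation the limit of the stochastic boundary term is not identified, and the argument does not close.

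A secondary remark: the obstacle you anticipate (strong versus weak convergence of $\gamma u^\epsilon$ and products with the merely measurable factors $(b^j\mathbf{n}_j)^\pm$) is real, and your suggestion to run the commutator Cauchy argument of step 2 of Proposition \ref{PROPtrace} on the $u^\epsilon$ is in the right spirit; the paper itself is terse here, passing to a weak limit $\tilde{\gamma}u$ of the boundary restrictions and identifying $\tilde{\gamma}u=\gamma u$ a posteriori by taking covariations with $B^j$ in the difference of the two weak formulations. But fixing that point does not substitute for the missing Stratonovich-to-It\^o conversion of the boundary integral, which must be carried out \emph{before} letting $\epsilon\to 0$ (it is performed on the regularized identity \eqref{IBVPEXT1}), since only the It\^o form is stable under the weak limits you actually have at your disposal.
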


\begin{proof} 
1. For each $\ve> 0$, let us denote by $u_0^\ve$, $u_b^\ve$ respectively the
standard mollifications of $u_0$ and $u_b$, satisfying compatibility conditions. 
Similarly, $b^{\ve}$ the mollification of $b$.
Let   $X_t^{\ve }$  be  the associated flow 
given by \eqref{itoass}, and define  (see Lemma \ref{lemaexis})
\begin{equation}\label{reprereg}
u^\ve(t,x):=
\left \{
\begin{aligned}
       {u}_{0}^\ve(\bar{Y}^\ve_{\kappa^\ve,t}(x)), & \quad \text{if  $\kappa^\ve(t,x)= 0$},
\\[5pt]
       {u}_{b}^\ve(\kappa^\ve, \bar{Y}^\ve_{\kappa^\ve,t}(x)), & \quad \text{if  $\kappa^\ve(t,x)> 0$},
\end{aligned}
\right.
\end{equation}
where $\kappa^\ve> 0 $ is given by \eqref{TAU}. 
Thus  $u^{\ve }(t,x)$ is uniformly bounded, 
with respect to $\ve >0$, and satisfies for each test function
$\varphi \in C^\infty_c(\R^d)$ and all $t \in [0,T]$
\begin{equation}
\label{IBVPEXT1}
     \begin{aligned}
      \int_U  & u^\ve(t,x) \varphi(x) \ dx= \int_{U} u^\ve_{0}(x)\varphi(x) \ dx
      +\int_{0}^{t} \!\! \int_{U} u^\ve(s,x) \, b^\ve(s,x) \cdot \nabla \varphi(x) \ dx ds
\\      
        &+ \int_{0}^{t}\!\!  \int_{U} u^\ve(s,x) \, \dive b^\ve(s,x) \, \varphi(x) \ dx ds 
           -  \int_{0}^{t} \!\! \int_{\Gamma} \gamma u^\ve(s,r) \, \varphi(r) \, b^\ve \cdot \mathbf{n}  \, dr ds
\\[5pt]
     &-\int_{0}^{t} \!\! \int_{\Gamma}  \gamma u^\ve(s,r) \, \varphi(r)  \, \mathbf{n}_{j} \ d{r}\,  {\circ}dB_{s}^{j}
      + \int_{0}^{t} \!\! \int_{U} u^\ve(s,x) \ \partial_j \varphi(x) \ dx \, {\circ}dB_{s}^{j},
   \end{aligned}
\end{equation}
where $ \gamma u^\ve \, \mathbf{n}= u^\ve_\mathbf{o} \, \mathbf{n^o}_{\!\!\!\ve} - u^\ve_b \, \mathbf{n^i}_{\!\!\ve}$ almost sure,
with $\mathbf{n^i}_{\!\!\ve}= (-  \chi_{\{\bar{Y}^\ve / \kappa^\ve> 0 \}}) \, \mathbf{n}$, and 
analogously $\mathbf{n^o}_{\!\!\!\ve}$.

\underline{Claim 1}: The family $\{\kappa^\ve\}_{\ve>0}$ converges to $\kappa$ as $\ve \to 0$ for almost all $\omega \in \Omega$, and
a.e. $(t,x) \in U_T$. 

Proof of Claim 1: Indeed, due to Remark \ref{stochastic influx} for a.e. $(t,x) \in U_T$ fixed,
we have that $Y^\ve(s)$ converges to $Y(s)$ as $\ve \to 0$
uniformly over any closed interval in $[0,T]$, where $Y^\ve(s) \equiv Y^\ve(s,t,x)$, similarly $Y(s)$. 
Hence given $\eta> 0$, there exists a $\ve_0> 0$ (which does not depend on $s$), such that 
if $0< \ve < \ve_0$, then 
\begin{equation}
\label{TAUCONV}
    |Y^\ve(s) - Y(s)|< \eta,
\end{equation}
which is to say, there exists a tubular neighborhood $\pi$ around $Y(s)$
with radius $\eta>0$, such that, $(s,Y^\ve(s)) \subset \pi$ for any $s \in (0,t)$, and $\ve < \ve_0$. 
Now, since the domain $U$ has regular boundary, upon rotating and relabeling the coordinates 
axes if necessary, we may locally represent the lateral boundary $[0,T] \times \Gamma$ by a graph, say 
$\Pi$. By definition, let $\kappa> 0$ be the first value 
of $s$ such that, $Y(s) \in \Pi$. 
Analogously, $\kappa^\ve> 0$, such that $Y^\ve(\kappa^\ve) \in \pi \cap \Pi$. Therefore, from \eqref{TAUCONV} 
$$
   \lim_{\ve \to 0} \kappa^\ve(\omega)= \kappa(\omega).
$$ 
This is clear (because of the uniform convergence) when the curve $Y(s)$ cross $\Pi$, but 
it may happens that $Y(s)$ touch $\Pi$ at some point, say $Y(\kappa)$, 
and the above convergence become false. 
Although, due to Brownian motion's property, the set 
of points where $Y(s)$ is tangent to $\Pi$
has probability zero (recall that the Brownian motion is 
nowhere differentiable almost surely).

2. Now, we conveniently reformulate equation \eqref{IBVPEXT1} 
in It\^o's form. One remarks that,
differently from Remark \ref{lemmaito1} we have to deal with boundary terms,
which have never been done before in the literature.
To begin, let us consider the relation between It\^o and Stratonovich integrals 
in  \eqref{IBVPEXT1}, that is
\begin{equation}
\label{Q1}
   \int_{0}^{t} \!\! \int_{U} u^\ve(s,x)  \; \partial_{j} \varphi(x) \ dx \, {\circ}dB_{s}^{j}   
   = \int_{0}^{t} \!\! \int_{U} u^\ve(s,x)  \; \partial_{j} \varphi(x) \ dx \ dB_{s}^{j}
   +\frac{I_1}{2},
\end{equation}
\begin{equation}
\label{Q2}
       \int_{0}^{t} \!\! \int_{\Gamma}  \gamma u^\ve(s,r) \, \varphi(r)  \, \mathbf{n}_j(r) \ d{r}\,  {\circ}dB_{s}^{j}
       = \int_{0}^{t} \!\! \int_{\Gamma} \gamma u^\ve(s,r) \, \varphi(r)  \, \mathbf{n}_j(r) \ d{r}\, dB_{s}^{j}  
       +\frac{I_2}{2},
\end{equation}
where 
$$
    I_1:=  \left[    \int_{U} u^\ve(.,x)  \; \partial_{j} \varphi(x) \ dx,  B_{(.)}^{j} \right]_t, 
    \quad I_2:= \left[     \int_{\Gamma} \gamma u^\ve(.,r) \, \varphi(r)  \, \mathbf{n}_j(r) \ d{r},  B_{(.)}^{j} \right]_t,
$$
and $[., .]_t$ denotes the joint quadratic variation, 
which is a bounded variation term (see Appendix
for more details). In fact, we 
compute these two joint quadratic variations
above, from equation \eqref{IBVPEXT1} with special test functions, and 
observe that, only the martingale part 
have to be considered. 

To compute $I_1$, we replace $\vp$ in  \eqref{IBVPEXT1}--\eqref{Q2} by $\partial_j \vp$. Then, for each $j= 1, \ldots, d$, 
the martingale part of  $\int_{U} u^\ve(t,x)  \; \partial_{j} \varphi(x) \ dx$  is 
$$
   \int_{0}^{t} \!\! \int_{U} u^\ve(s,x)  \;  \partial_{i}\big( \partial_{j} \varphi(x)\big) \ dx \, dB_{s}^{i}
   - \int_{0}^{t} \!\! \int_{\Gamma} \gamma u^\ve(s,r) \, \partial_{j} \varphi(r)  \, \mathbf{n}_i(r) \ d{r}\,  dB_{s}^{i }.
$$
Thus, we have  
\begin{equation}\label{m1}
          I_1=  \int_{0}^{t}  \int_{U} u^\ve(s,x)  \;  \partial_{j}^{2} \varphi(x) \ dx \, ds
          - \int_{0}^{t}  \int_{\Gamma} \gamma u^\ve(s,r) \,  \partial_{j}  \varphi(r)  \, \mathbf{n}_j(r) \ d{r} ds.
\end{equation}
Now, we compute $I_2$. Similarly, we replace $\vp(x)$  
in equation \eqref{IBVPEXT1}--\eqref{Q2} by $\varphi(x)  \, \partial_{j}\zeta_\mu (h(x))$ as a test function, where 
for $\mu> 0$, $\zeta_\mu: \R \to [-1,1]$ is given by 
$$
   \zeta_\mu(\tau):= 
   \left \{
   \begin{aligned}
  \sgn \tau, \quad &\text{if $|\tau| > \mu$}, 
  \\[5pt]
  \frac{\tau}{\mu} \hspace{5pt}, \quad &\text{if $|\tau| \leq \mu$},
   \end{aligned}
   \right.
$$ 
with $h(x)$ the given function at the Appendix. 
Certainly, we have to mollify $\zeta_\mu$ by a standard mollifier $\rho_n$ to have the necessary regularity,
and then first pass to the limit as $n \to \infty$ (we omit this standard procedure). To begin, we consider
the left hand side of  \eqref{IBVPEXT1}, then we pass to the martingale terms in the right hand side of it.
 
 \medskip
{\underline{Claim 2}}: For each $t \in [0,T]$, and $j= 1,\ldots,d$, it follows that
\begin{equation}\label{areaeycoarea}
    \ess \lim_{\mu \rightarrow 0^+ }\int_U \, u^\ve(t,x) \; \varphi(x) \,  \partial_{j} \zeta_\mu(h(x)) dx=  
     -\int_{\Gamma}  \gamma u^\ve(t,r) \, \varphi(r)  \, \mathbf{n}_j(r) \ d{r},
\end{equation}
for each test function $\varphi \in C^\infty_c(\R^d)$. 

Proof of Claim 2: Fix any point $r \in \Gamma$. Then, since $\Gamma$ is $C^2$, there exists a neighbourhood 
$W$ of $r$ in $\R^d$, an open set $V \subset \R^{d-1}$ and 
a $C^2$ mapping $\zeta: V \to \Gamma \cap W$, which is a $C^1-$diffeomorphism. 
Let $\Psi_\tau:[0,1] \times \Gamma \to \ol{U}$ be an admissible deformation (see Appendix), and recall that
$$
   \lim_{\tau \to 0} J[\Psi_\tau \circ \zeta]= J[\zeta] \quad \text{in $C(V)$},
$$
where $J\Psi_\tau$ denotes the Jacobian of the map $\Psi_\tau$.
Now, we set $\Upsilon= \Gamma \cap W$, $\Upsilon^\tau= \Psi_\tau(\Upsilon)$,
and consider $\varphi \in \clg{E}$, where $\clg{E}$ is a countable dense subset of $C_c^\infty(W)$.
Therefore, applying the Coarea Formula for the function $h$,  we have 
for each $t \in [0,T]$ and $\mu> 0$ (sufficiently small) 
\begin{equation}
\label{AUX1}
   \begin{aligned}
   \int_{U \cap W} \ u^\ve(t,x) \, \varphi(x)  \, \partial_{j} \zeta_\mu(h(x)) \, dx
   &= -\int_{0}^{\mu} \!\!\! \int_{\Upsilon^{\tau}} u^\ve(t,r) \; \varphi(r) \;  \zeta'_\mu(\tau) \ \mathbf{n}^\tau_j(r) \, dr d\tau
   \\[5pt]
   &= -\frac{1}{\mu} \int_{0}^{\mu} \!\!\! \int_{\Upsilon^{\tau}} u^\ve(t,r) \; \varphi(r)  \ \mathbf{n}^\tau_j(r) \, dr d\tau.
\end{aligned}
\end{equation}
The goal now is to
pass to the limit as $\mu \to 0^+$, consequently as $\tau \to 0^+$.  
First we apply the Area Formula for $\Psi_\tau$
in the right hand side of the above equation. 
Indeed, observing that we may replace $\varphi(r)$ by $\varphi(\Psi^{-1}_\tau(r))$, 
similarly $\mathbf{n}^\tau(r)$ by $\mathbf{n}(\Psi^{-1}_\tau(r))$,
with an error that goes to zero as $\tau \to 0^+$, we have
\begin{equation}
\label{AUX2}
\begin{aligned}
   \int_{0}^{\mu} \!\!\! \int_{\Upsilon^{\tau}}  u^\ve(t,r) \;& \varphi(\Psi^{-1}_\tau(r)) \ \mathbf{n}_j(\Psi^{-1}_\tau(r)) \, dr d\tau
   \\[5pt]
   &=  \int_{0}^{\mu} \!\!\! \int_{\Upsilon} u^\ve(t,\Psi_\tau(r)) \; \varphi(r)  \ \mathbf{n}_j(r) \ J[\Psi_\tau] \, dr d\tau,
\end{aligned}
\end{equation}
where $J[\Psi_\tau]$ is defined by 
$$
J[\Psi_\tau](r):= \dfrac{J[\Psi_\tau\circ\zeta](\zeta^{-1}(r))}{J[\zeta](\zeta^{-1}(r))},
$$
and satisfies $J[\Psi_\tau]\to 1$ uniformly as $\tau \to 0$. 
Passing to the limit as $\mu \to 0$, we obtain from \eqref{AUX1}, \eqref{AUX2} 
$$
   \begin{aligned}
   \ess \lim_{\mu \rightarrow 0^+} & \int_{U \cap W} \, u^\ve(t,x) \; \varphi(x) \,  \partial_{j} \zeta_\mu(h(x)) dx
   \\[5pt]
     &= -\ess \lim_{\mu \rightarrow 0^+} \Big( \frac{1}{\mu} \int_{0}^{\mu}\!\!\!  \int_{\Upsilon} u^\ve(t,\Psi_\tau(r)) 
     \; \varphi(r) \ \mathbf{n}_j(r) \ J[\Psi_\tau] \, dr d\tau \Big)
     \\[5pt]
     &=   -\int_{\Upsilon}  \gamma u^\ve(t,r) \, \varphi(r)  \, \mathbf{n}_j(r) \ d{r}
   \end{aligned}
$$
for each test function $\varphi \in C^\infty_c(W)$, 
where we used the density of $\clg{E}$ in $C^\infty_c(W)$,
the Dominated Convergence Theorem, 
and Remark \ref{REMTRACEREGCOEFF}.

Finally, since $\Gamma$ is a compact set, we can applying a standard partition of unity
argument, exchange $U \cap W$, $\Upsilon$ respectively by $U$,  
$\Gamma$ in the previous steps, which is to say, 
consider the general case. So the claim is proved.

Henceforth, this standard procedure of partition of unity, applied above, 
is considered implicitly.

Now, let us study for $j=1, \ldots, d$, 
$$
\begin{aligned}
   \int_{0}^{t} \!\! \int_{U} u^\ve(s,x)  &\; \partial_{i} \Big(\varphi(x) \, \partial_j \zeta_\mu(h(x))\Big)  \ dx dB_{s}^{i}  
   \\[5pt]
      &- \int_{0}^{t} \!\! \int_{\Gamma}  \gamma u^\ve(s,r) \, \varphi(r)  
       \, \partial_j \zeta_\mu(h(r)) \, \mathbf{n}_i(r) \ d{r}\, dB_{s}^{i},
\end{aligned}
$$
or after some computations 
$$
\begin{aligned}
   \int_{0}^{t} \!\! \int_{U} u^\ve(s,x)  &\; \partial_{i} \varphi(x) \, \zeta'_\mu(h(x)) \, \partial_j h(x) \ dx dB_{s}^{i}  
   \\[5pt]
   &+  \int_{0}^{t} \!\! \int_{U} u^\ve(s,x)  \; \varphi(x) \, \zeta'_\mu(h(x)) \,\partial_{i} \partial_j h(x) \ dx dB_{s}^{i}  
   \\[5pt]
   &+  \int_{0}^{t} \!\! \int_{U} u^\ve(s,x)  \; \varphi(x) \, \zeta''_\mu(h(x)) \,\partial_{i} h(x) \, \partial_j h(x) \ dx dB_{s}^{i}  
   \\[5pt]
      &- \int_{0}^{t} \!\! \int_{\Gamma}  \gamma u^\ve(s,r) \, \varphi(r)  
       \, \zeta_\mu'(0)  \, \partial_j (h(r)) \, \mathbf{n}_i(r) \ d{r}\, dB_{s}^{i}.
\end{aligned}
$$
Therefore, taking the variation in the above terms, we obtain 
\begin{equation} 
\label{J1}
\begin{aligned}
   \int_{0}^{t} \!\! \int_{U} u^\ve(s,x)  &\; \partial_{i} \varphi(x) \, \zeta'_\mu(h(x)) \, \partial_i h(x) \ dx ds  
   \\[5pt]
   &+  \int_{0}^{t} \!\! \int_{U} u^\ve(s,x)  \; \varphi(x) \, \zeta'_\mu(h(x)) \,\partial^2_{i} h(x) \ dx ds
   \\[5pt]
   &+  \int_{0}^{t} \!\! \int_{U} u^\ve(s,x)  \; \varphi(x) \, \zeta''_\mu(h(x)) \,|\partial_{i} h(x)|^2 \ dx ds
   \\[5pt]
      &- \int_{0}^{t} \!\! \int_{\Gamma}  \gamma u^\ve(s,r) \, \varphi(r)  
       \, \zeta_\mu'(0)  \, \partial_i h(r) \, \mathbf{n}_i(r) \ d{r}\, ds
       \\[5pt]
       &=: J_1 + J_2 + J_3-J_4,
   \end{aligned}
\end{equation}
with obvious notations. 

{\underline{Claim 3}}: For each $t \in [0,T]$, 
and all test functions $\varphi \in C^\infty_c(\R^d)$,
it follows that:
\begin{equation}
\label{areaeycoarea1}
\begin{aligned}
i)& \ess \lim_{\mu \rightarrow 0^+ }  J_1=  
     -\int_0^t \!\! \int_{\Gamma}  \gamma u^\ve(s,r) \, \nabla \varphi(r)  \cdot \mathbf{n}(r) \ d{r} ds,
\\[5pt]     
ii)& \ess \lim_{\mu \rightarrow 0^+ }  J_2= (d-1)  \int_0^t \!\! \int_{\Gamma}  \gamma u^\ve(s,r) \, \varphi(r)  \, H(r) \ d{r} ds,
\\[5pt]
iii)& \ess \lim_{\mu \rightarrow 0^+ }  (J_3 - J_4)=  0,
\end{aligned}
\end{equation}
where $H$ is the mean curvature of $\Gamma$. 

Proof of Claim 3: Assertion $(i)$ and $(ii)$ follow similarly to the proof of Claim 2. 
Thus, let us show item $(iii)$. Moreover, as mentioned before we omit the localization 
procedure and the partition of unit argument. 
Applying the Coarea Formula for the function 
$h$, and then the Area Formula for the map $\Psi_\tau$,
we have
$$
   \begin{aligned}
& \ess \!\! \lim_{\mu \to 0^+} (J_3 - J_4)
\\[5pt]
&= \ess \!\! \lim_{\mu \to 0^+} \Big(- \frac{1}{\mu} \int_0^t \!\!\! \int_{U} u^\ve(s,x)  \; \varphi(x) \ \delta_\mu(h(x)) 
   \, |\partial_i h(x)|^2 \, dx ds
\\[5pt]   
    &\qquad + \frac{1}{\mu} \int_0^t \!\!\! \int_{\Gamma} \gamma u^\ve(s,r)  \; \varphi(r) \,
    \, |\partial_i h(r)| \, dr ds \Big)
    \\[8pt]
    &= \ess \!\! \lim_{\mu \to 0^+} \Big(- \frac{1}{\mu} \int_0^t \!\!\! \int_0^\mu \!\!\! \int_{\Gamma^\tau} 
    u^\ve(s,r)  \; \varphi(r) \ \delta_\mu(\tau) \, |\partial_i h(r)| \, dr d\tau ds
\\[5pt]   
    &\qquad + \frac{1}{\mu} \int_0^t \!\!\! \int_0^\mu \!\!\! \int_{\Gamma} u^\ve(s,\Psi_\tau(r))  \; \varphi(\Psi_\tau(r)) \,
    \delta_0(\tau)
    \, |\partial_i h(\Psi_\tau(r))| J[\Psi_\tau] \, dr d\tau ds \Big)
    \\[8pt]
    &= \ess \!\! \lim_{\mu \to 0^+} \frac{1}{\mu} \int_0^t \!\!\! \int_0^\mu \!\!\! \int_{\Gamma} 
    u^\ve(s,\Psi_\tau(r))  \; \varphi(\Psi_\tau(r)) \ (\delta_0-\delta_\mu) \, |\partial_i h|  J[\Psi_\tau] \, dr d\tau ds= 0,
\end{aligned}
$$
where $\delta_\mu$ is the (approaching sequence) Dirac measure concentrated at $\mu$,
and we have used the Dominated Convergence Theorem. Therefore, the proof of 
Claim 3 is finished.

We are ready to write equation \eqref{IBVPEXT1} 
in the equivalent It\^o's form (bounded domains), that is
\begin{equation}
\label{IBVPEXTIto}
     \begin{aligned}
      \int_U  & u^\ve(t,x) \varphi(x) dx= \int_{U} u^\ve_{0}(x) \varphi(x) \ dx
      +\int_{0}^{t} \!\! \int_{U} u^\ve(s,x) \, b^\ve(s,x)  \cdot \nabla \varphi(x) \ dx ds
\\      
        &+ \int_{0}^{t}\!\!  \int_{U} u^\ve(s,x) \, \dive \, b^\ve(s,x)\,  \varphi(x) \ dx ds 
           -  \int_{0}^{t} \!\! \int_{\Gamma} \gamma u^\ve(s,r) \, \varphi(r) \, b^\ve \cdot \mathbf{n} \ dr ds
\\[5pt]
%     &+ \int_{0}^{t} \!\! \int_{\Gamma} u^\ve_b(s,r) \, \varphi(r) \ b^\ve \cdot \mathbf{n^i}_{\!\!\ve} \ dr ds
     &-\int_{0}^{t} \!\! \int_{\Gamma}  \gamma u^\ve(s,r) \, \varphi(r)  \, \mathbf{n}_j(r) \ d{r} dB_{s}^{j}
    -  \int_{0}^{t} \!\! \int_{\Gamma}  \gamma u^\ve(s,r) \, \nabla \varphi(r) \cdot \mathbf{n}(r) \, dr ds
\\[5pt]
  &+ \frac{(d-1)}{2}   \int_{0}^{t} \!\! \int_{\Gamma}  \gamma u^\ve(s,r) \, \varphi(r) \, H(r) \, dr ds 
\\[5pt]
    &+ \int_{0}^{t} \!\! \int_{U} u^\ve(s,x)  \; \partial_j \varphi(x) \ dx dB_{s}^{j} 
    + \frac{1}{2} \int_{0}^{t} \!\! \int_{U} u^\ve(s,x) \, \Delta \varphi(x) \,  dx ds.
   \end{aligned}
\end{equation}

4. Limit transition. Since
the family $\{u^{\varepsilon}\}$ by our construction given by Lemma \ref{lemaexis}
is uniformly bounded up to the boundary, there exists
a function $u \in L^{\infty}(U_T \times \Omega)$, 
the weak-star limit of $u^\ve$ as $\ve \to \infty$,
such that the process $ \int_U u(t,x) \varphi(x) dx$ is adapted, 
since it is the weak limit in $L^{2}([0, T] \times \Omega)$ of adapted processes, see  \cite{Pardoux} Chapter III.
Analogously, there exists a function $u_\Gamma \in L^\infty([0,T] \times \Gamma \times \Omega)$, which is 
the weak-star limit of $\gamma u^\ve$, such that the process $ \int_\Gamma u_\Gamma(t,r) \varphi(r) dr$ is adapted,
since, passing to the limit as as $\ve \to 0$ in \eqref{IBVPEXTIto}, we have
$$
     \begin{aligned}
      \int_U  & u(t,x) \varphi(x) dx= \int_{U} u_{0}(x) \varphi(x) \ dx
      +\int_{0}^{t} \!\! \int_{U} u(s,x) \, b(s,x) \cdot \nabla \varphi(x) \ dx ds
\\      
        &+ \int_{0}^{t}\!\!  \int_{U} u(s,x) \, \dive \, b(s,x)\,  \varphi(x) \ dx ds 
           -  \int_{0}^{t} \!\! \int_{\Gamma} u_\Gamma(s,r) \, \varphi(r) \, b \cdot \mathbf{n} \ dr ds
\\[5pt]
     &-\int_{0}^{t} \!\! \int_{\Gamma}  u_\Gamma(s,r) \, \varphi(r)  \, \mathbf{n}_j(r) \ d{r} dB_{s}^{j}
    -  \int_{0}^{t} \!\! \int_{\Gamma}  u_\Gamma(s,r) \, \nabla \varphi(r) \cdot \mathbf{n}(r) \, dr ds
\\[5pt]
  &+ \frac{(d-1)}{2}   \int_{0}^{t} \!\! \int_{\Gamma}  u_\Gamma(s,r) \, \varphi(r) \, H(r) \, dr ds 
\\[5pt]
    &+ \int_{0}^{t} \!\! \int_{U} u(s,x)  \; \partial_j \varphi(x) \ dx dB_{s}^{j} 
    + \frac{1}{2} \int_{0}^{t} \!\! \int_{U} u(s,x) \, \Delta \varphi(x) \,  dx ds,
   \end{aligned}
$$
or equivalently 
\begin{equation}
\label{IBVPEXT10}
     \begin{aligned}
      \int_U  & u(t,x) \varphi(x) dx= \int_{U} u_{0}(x)\varphi(x) \ dx
      +\int_{0}^{t} \!\! \int_{U}u(s,x) \, b^j(s,x)  \, \partial_j \varphi(x) \ dx ds
\\      
        &+ \int_{0}^{t}\!\!  \int_{U} u(s,x) \, \dive \, b(s,x)\,  \varphi(x) \ dx ds 
           -  \int_{0}^{t} \!\! \int_{\Gamma} u_\Gamma(s,r) \ b \cdot \mathbf{n} \ \varphi(r) \, dr ds
\\[5pt]
     &-\int_{0}^{t} \!\! \int_{\Gamma}  u_\Gamma(s,r) \mathbf{n}_{j} \, \varphi(r)  \ d{r}\,  {\circ}dB_{s}^{j}
        + \int_{0}^{t} \!\! \int_{U} u(s,x)  \; \partial_j \varphi(x) \ dx \, {\circ}dB_{s}^{j}.
   \end{aligned}
\end{equation}

5. Finally we show \eqref{IBVPEXT}. First, 
we observe that $u$ is also a distributional $L^{\infty}-$solution of \eqref{trasport}. 
Then, from equation \eqref{STOCTRACE} with $\beta(z)= z$ 
and equation \eqref{IBVPEXT10}, we have 
\begin{equation}
\label{trace}
     \begin{aligned}
           &\int_{0}^{t} \!\! \int_{\Gamma} \gamma u(s,r) \, \varphi(r)  \ b \cdot \mathbf{n} \ drds
     +\int_{0}^{t} \!\! \int_{\Gamma}  \gamma u(s,r) \, \varphi(r)  \, \mathbf{n}_j \ d{r} \,  {\circ}dB_{s}^{j}
			 \\[5pt]
		 &=\int_{0}^{t} \!\! \int_{\Gamma} u_\Gamma(s,r) \, \varphi(r) \, b \cdot \mathbf{n} \ drds
     + \int_{0}^{t} \!\! \int_{\Gamma}  u_\Gamma(s,r) \, \varphi(r)  \, \mathbf{n}_j \ d{r}\,  {\circ}dB_{s}^{j}.
   \end{aligned}
\end{equation}
Therefore, taking covariation with respect to $B^j$, we obtain for $j= 1,\ldots,d$
$$
     \int_{0}^{t} \!\! \int_{\Gamma}  \gamma u(s,r) \, \varphi(r)  \, \mathbf{n}_j \ d{r}\,  ds=
     \int_{0}^{t} \!\! \int_{\Gamma}   u_\Gamma(s,r) \, \varphi(r)  \, \mathbf{n}_j \ d{r}\,  ds, 
$$
which is to say, $\gamma u = u_\Gamma$ almost sure. 
Consequently, from the uniqueness of the limit and Claim 1, it follows that
$ \gamma u \, \mathbf{n}= u_\mathbf{o} \, \mathbf{n^o} - u_b \, \mathbf{n^i}$, which
shows \eqref{IBVPEXT}, and the theorem is proved.
\end{proof}

%%%%%%%%%%%%%%%%%%%%%%%%%
\section{Uniqueness} \label{UNIQUENESS}
%%%%%%%%%%%%%%%%%%%%%%%%%

In this section, we present the uniqueness theorem
for the SPDE (\ref{trasport}).  We prove uniqueness following 
the concept of renormalized solutions introduced  by DiPerna,
Lions.  The BV framework is the one adopted in
the sequel, where we make extensive use of  the ideas from \cite{ambrosio}.

\begin{lemma} \label{leuni}  
Assume condition \eqref{con1}. Let $u$ be a distributional 
$L^\infty$-solution of the stochastic IBVP \eqref{trasport}, and define  $v:= \mathbb{E}(\beta(u))$
for any $\beta \in C^2(\R)$. 
Then, for each $u_{0} \in L^{\infty}(U)$ the function $v(t,x)$ satisfies
\begin{equation}
\label{norma}
      \partial_t v(t,x) + b(t,x) \cdot \nabla v(t,x)= \frac{1}{2} \Delta  v(t,x)
     \quad \text{in   $ \mathcal{D}'([0,T) \times U)$}.   
\end{equation}
\end{lemma}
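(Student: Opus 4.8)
The plan is to combine the renormalization property of the distributional solution with a passage to the expectation. Since \eqref{norma} is posed in the interior $[0,T)\times U$, no boundary terms enter and the delicate analysis of Section~\ref{STEQ} is not needed here: it suffices to test against $\varphi\in C^\infty_c(U)$, for which the usual interior mollification is available.

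\emph{Step 1: renormalize.} By Remark~\ref{lemmaito1}, the distributional solution $u$ satisfies the It\^o identity \eqref{DISTINTITO}. Mollifying $u$ by a standard $\rho_\varepsilon$ and applying the It\^o-Ventzel-Kunita formula to $\beta(u_\varepsilon)$ --- exactly as in the derivation of \eqref{au} --- produces the same identity for $\beta(u_\varepsilon)$ up to the DiPerna--Lions/Ambrosio commutators $\mathcal{R}_\varepsilon(b,u)$, $\mathcal{P}_\varepsilon(u)$ and the second-order It\^o correction. Under \eqref{con1}, with $u\in L^\infty$ and $\beta,\beta',\beta''$ bounded on the range of $u$, Ambrosio's commutator lemma for $BV$ fields gives $\mathcal{R}_\varepsilon(b,u)\to 0$ in $L^1_{\loc}$, while the noise commutators cancel in the It\^o formulation (the mechanism already used in the proof of Proposition~\ref{PROPtrace}; cf. \cite{FGP2}). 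Letting $\varepsilon\to 0^+$ and testing against $\varphi\in C^\infty_c(U)$, we obtain that $\beta(u)$ is itself a distributional $L^\infty$-solution, i.e. \eqref{DISTINTITO} holds with $u$ and $u_0$ replaced by $\beta(u)$ and $\beta(u_0)$.

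\emph{Step 2: take expectations.} The integrand $\beta(u)\,\partial_{x_i}\varphi$ is bounded, so $\int_0^t\!\int_U\beta(u)\,\partial_{x_i}\varphi\,dx\,dB^i_s$ is a genuine martingale of zero mean. Applying $\mathbb{E}$ to the renormalized identity and using Fubini (bounded integrands on a finite measure space), we get for all $\varphi\in C^\infty_c(U)$ and $t\in[0,T]$
\begin{equation*}
\begin{aligned}
\int_U v(t,x)\varphi(x)\,dx = {}& \int_U \beta(u_0(x))\varphi(x)\,dx + \int_0^t\!\!\int_U v(s,x)\,b(s,x)\cdot\nabla\varphi(x)\,dx\,ds \\
&{}+ \int_0^t\!\!\int_U v(s,x)\,\dive b(s,x)\,\varphi(x)\,dx\,ds + \frac12\int_0^t\!\!\int_U v(s,x)\,\Delta\varphi(x)\,dx\,ds,
\end{aligned}
\end{equation*}
with $v=\mathbb{E}(\beta(u))\in L^\infty(U_T)$. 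Thus $t\mapsto\int_U v(t,x)\varphi(x)\,dx$ is absolutely continuous; choosing space-time test functions of product form $\eta(t)\varphi(x)$, $\eta\in C^\infty_c([0,T))$, integrating by parts in $t$, and using that such products span a dense subspace of $C^\infty_c([0,T)\times U)$, this identity is precisely the weak formulation of \eqref{norma}, with the convective term read in the renormalized sense $b\cdot\nabla v:=\dive(bv)-v\,\dive b$; it moreover records $v|_{t=0}=\beta(u_0)$.

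The only genuinely delicate point is the renormalization in Step~1; it is, however, classical and has effectively already been carried out in the proof of Proposition~\ref{PROPtrace}, so the remaining work --- the vanishing of the martingale under $\mathbb{E}$, the Fubini argument, and the integration by parts in time --- is routine bookkeeping.
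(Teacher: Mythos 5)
Your proposal is correct and follows essentially the same route as the paper: interior mollification, the It\^o--Ventzel--Kunita formula applied to $\beta(u_\varepsilon)$, Ambrosio's commutator lemma under \eqref{con1}, expectation killing the martingale while the It\^o--Stratonovich correction produces the $\tfrac12\Delta$ term, and a product test-function density argument in time. The only cosmetic difference is that the paper renormalizes in Stratonovich form and converts to It\^o only at the expectation step, and with a genuine interior convolution the noise commutator $\mathcal{P}_\varepsilon$ is identically zero (convolution commutes with $\partial_i$) rather than ``cancelling in the It\^o formulation,'' but this does not affect the argument.
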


\begin{proof} 
1. For $\ve> 0$, we define $U_\ve:= \{x \in U : \rm{dist}(x, \partial U) > \ve \}$. 
Let $\phi_{\varepsilon}$ be a standard symmetric mollifier (with support
on a ball of radius less than $\ve$), and 
$u$  a distributional  $L^{\infty}$-solution of (\ref{trasport}). Then,
for each $t \in [0,T]$, and $x \in U_\ve$ taking $\phi_\ve$ as a test function in 
\eqref{DISTINTSTR}, we obtain
$$
   \begin{aligned}
u_{\varepsilon}(t,x)  \equiv &\int_U  u(t,z) \phi_{\ve}(x-z) dz= u_0\ast \phi_{\varepsilon}(x) 
\\[5pt]
&+ \int_{0}^{t} \int_U u(s,z) \, b^i(s,z) \, \partial_i \phi_{\ve}(x-z)  \ dz ds 
\\[5pt]
&+ \int_{0}^{t} \int_U u(s,z) \, \dive b(s,z) \, \phi_{\ve}(x-z) \ dz ds 
\\[5pt]
&+  \int_{0}^{t} \int_U
 u(s,z)\,  \partial_i \phi_{\ve}(x-z) \, dz \circ dB_{s}^{i}.
\end{aligned}
$$
For $\beta \in C^{2}(\mathbb{R})$, we apply the  It\^o-Ventzel-Kunita
formula (see Theorem 8.3 of \cite{Ku2}
in the above equation,
hence we have   
\begin{equation}
\label{EQPARABOLIC}
   \begin{aligned}
   \beta(u_{\varepsilon}(t,x))&= \beta(u_0\ast \phi_{\varepsilon}(x)) 
\\[5pt]
&+ \int_{0}^{t}  \beta'(u_{\varepsilon}(s,x))  \int_U u(s,z) \, b^i(s,z) \, \partial_i \phi_{\ve}(x-z)  \ dz ds 
\\[5pt]
&+ \int_{0}^{t} \beta'(u_{\varepsilon}(s,x)) \int_U u(s,z) \, \dive b(s,z) \, \phi_{\ve}(x-z) \ dz ds 
\\[5pt]
&+  \int_{0}^{t} \beta'(u_{\varepsilon}(s,x)) \int_U
 u(s,z)\,  \partial_i \phi_{\ve}(x-z) \, dz \circ dB_{s}^{i}.
\end{aligned}
\end{equation}

2.  Now it becomes clear our strategy, which follows the renormalization procedure. Then, 
we take a test function $\vp \in C^\infty_c(U)$, multiply equation \eqref{EQPARABOLIC}
by it, and integrate in $U$ to obtain 
$$
   \begin{aligned}
   \int_U &  \beta(u_{\varepsilon}(t)) \, \vp \,dx = \int_U \beta(u_0\ast \phi_{\varepsilon}(x)) \, \vp(x) \ dx 
\\[5pt]
&+ \int_{0}^{t} \int_U \int_U \beta'(u_{\varepsilon}(s,x)) u(s,z) \, b^i(s,z) \, \partial_i \phi_{\ve}(x-z) \, \vp(x)  \ dz dx  ds 
\\[5pt]
&+ \int_{0}^{t} \int_U \int_U \beta'(u_{\varepsilon}(s,x)) u(s,z) \, \dive b(s,z) \, \phi_{\ve}(x-z) \, \vp(x) \ dz dx ds 
\\[5pt]
&+  \int_{0}^{t} \int_U \int_U \beta'(u_{\varepsilon}(s,x)) 
 u(s,z)\,  \partial_i \phi_{\ve}(x-z) \, \vp(x) \, dz dx \circ dB_{s}^{i}, 
\end{aligned}
$$
where we have used Fubini's Stochastic Theorem, see for instance \cite{EPP}. 
Since $\phi_\ve$ is a symmetric mollifier, from an algebraic 
convenient manipulation and integration by parts, we obtain
\begin{equation}
\label{EQPARABOLIC10}
   \begin{aligned}
   \int_U &  \beta(u_{\varepsilon}(t)) \, \vp \,dx - \int_U \beta(u_0\ast \phi_{\varepsilon}(x)) \, \vp(x) \ dx 
\\[5pt]
&- \int_{0}^{t} \int_U  \beta(u_{\varepsilon}(s,x)) \, b^i(s,x) \, \partial_i  \vp(x)  \ dx  ds 
\\[5pt]
&- \int_{0}^{t} \int_U  \beta(u_{\varepsilon}(s,x)) \, \dive b(s,x) \, \vp(x) \ dx ds 
\\[5pt]
&-  \int_{0}^{t} \!\! \int_U  \beta(u_{\varepsilon}(s,x)) \, \partial_i \vp(x) \, dx \circ dB_{s}^{i}
      =      \int_{0}^{t}\!\!  \int_U \beta^\prime(u_\ve(s,x)) \varphi(x)  \mathcal{R}_{\ve}(b,u)  \ dx ds,
\end{aligned}
\end{equation}
where $ \mathcal{R}_{\ve}(b,u)$ is the commutator defined as
$$
    \mathcal{R}_{\ve}(b,u)=(b\nabla ) (\phi_{\ve}\ast u )- \phi_{\ve}\ast((b\nabla)u).
$$ 
One remarks that, the commutator above is similar 
to that one used by DiPerna, Lions in \cite{DL}.
Moreover, by the regularity assumptions on $b$ and $u$, applying the Commuting Lemma 
(see \cite{ambrosio} or Theorem 9 of \cite{Falnanas}), it follows that 
$$
 \lim_{\ve \to 0} \mathcal{R}_{\ve}(b,u)= 0, \quad \mathbb{P} \ a.s \mbox{ in }\ L^{1}([0,T];
L^{1}_{\loc}(\mathbb{R}^{d})).
$$
Therefore, since $u$ is measurable and bounded, $u_\ve$ converges to $u$ in $L^1_{\loc}$, 
we obtain from \eqref{EQPARABOLIC10} 
passing to the limit as $\ve \to 0$ 
\begin{equation}
\label{EQPARABOLIC20}
   \begin{aligned}
   \int_U   \beta(u(t,x)) \, \vp(x) \,dx &= \int_U \beta(u_0(x)) \, \vp(x) \ dx 
\\[5pt]
&+ \int_{0}^{t} \int_U  \beta(u(s,x)) \, b^i(s,x) \, \partial_i  \vp(x)  \ dx  ds 
\\[5pt]
&+ \int_{0}^{t} \int_U  \beta(u(s,x)) \, \dive b(s,x) \, \vp(x) \ dx ds 
\\[5pt]
&+  \int_{0}^{t}  \int_U  \beta(u(s,x)) \, \partial_i \vp(x) \, dx \circ dB_{s}^{i},
\end{aligned}
\end{equation}
where we have used the Dominated Convergence Theorem.

3. Recall Remark \ref{lemmaito1} and taking the expectation, 
it follows from \eqref{EQPARABOLIC20} that, the function 
$v(t,x)=\mathbb{E}(\beta(u(t,x)))$ satisfies 
$$
   \begin{aligned}
   \int_U   v(t,x) \, \vp(x) \,dx &= \int_U \beta(u_0(x)) \, \vp(x) \ dx 
\\[5pt]
&+ \int_{0}^{t} \int_U  v(s,x) \, b^i(s,x) \, \partial_i  \vp(x)  \ dx  ds 
\\[5pt]
&+ \int_{0}^{t} \int_U  v(s,x) \, \dive b(s,x) \, \vp(x) \ dx ds 
\\[5pt]
&+  \frac{1}{2} \int_{0}^{t}  \int_U  v(s,x) \, \Delta \vp(x) \, dx ds.
\end{aligned}
$$
Finally, for $\zeta \in C_c^{\infty}([0,T))$ we multiply the above equation by $\zeta'(t)$,
and integrating in $[0,T)$, we obtain that
$$
   \begin{aligned}
  \int_0^T  \int_U   v(t,x) \,\zeta'(t)  \vp(x) \,dxdt  &= -\int_U \beta(u_0(x)) \, \zeta(0) \vp(x) \ dx 
\\[5pt]
&- \int_{0}^{T} \int_U  v(s,x) \, b^i(t,x) \, \zeta(t) \partial_i  \vp(x)  \ dx  dt 
\\[5pt]
&- \int_{0}^{T} \int_U  v(t,x) \, \dive b(t,x) \, \zeta(t) \vp(x) \ dx dt
\\[5pt]
&-  \frac{1}{2} \int_{0}^{T}  \int_U  v(t,x) \, \zeta(t) \Delta \vp(x) \, dx dt.
\end{aligned}
$$
Since finite sums of function $\zeta_i(t) \vp_i(x)$,   
$(\zeta_i \in C_c^\infty([0,T)), \vp_i \in C_c^\infty(U))$ are dense in the space 
of test functions $\clg{D}([0,T) \times U)$, the thesis of the lemma follows by 
a standard density argument.
\end{proof}

Next, we pass to the uniqueness theorem. 
\begin{theorem}
\label{Tuni} Let $b$ be a drift vector field
satisfying conditions \eqref{con1}, \eqref{BCONDUNQ}.
If $u,v \in L^\infty(U_T \times \Omega)$ are two 
weak  $L^{\infty}-$solutions of the  IBVP \eqref{trasport}, 
with the same initial-boundary data
$u_0 \in L^\infty(U)$,  $u_b \in L^\infty(\Gamma_T)$, then $u \equiv v$ almost
sure in $U_T \times \Omega$.
\end{theorem}

\begin{proof}
1. First, by linearity it is enough to show that,
a  weak $L^{\infty}-$solution of the IBVP 
\eqref{trasport}, say $u(t,x)$,  with initial-boundary condition $u_{0}= 0$ and  $u_b= 0$ vanishes
identically. Since $u$ is a weak solution, for each $\vp \in C_c^\infty(\R^d)$, and $t \in [0,T]$, we have 
\begin{equation}
\label{IBVPEXTUN}
     \begin{aligned}
      \int_U  & u(t,x) \varphi(x) dx=
      \int_{0}^{t} \!\! \int_{U}u(s,x) \, b \cdot \nabla \varphi(x) \ dx ds
\\[5pt]     
        &+ \int_{0}^{t}\!\!  \int_{U} u(s,x) \, \dive \, b(s,x)\,  \varphi(x) \ dx ds 
           -  \int_{0}^{t} \!\! \int_{\Gamma} \gamma u(s,r) \, \varphi(r) \, b \cdot \mathbf{n} \ drds
\\[5pt]
     & -\int_{0}^{t} \!\! \int_{\Gamma}  \gamma u(s,r) \, \varphi(r)  \, \mathbf{n}_j \ d{r}\,  {\circ}dB_{s}^{j}
      + \int_{0}^{t} \!\! \int_{U} u(s,x)  \; \partial_{x_{j}} \varphi(x) \ dx \, {\circ}dB_{s}^{j},
   \end{aligned}
\end{equation}
where $\gamma u \, \mathbf{n}= u_\mathbf{o} \, \mathbf{n^o}$, since $u_b= 0$.
In particular, taking $\vp \in C^\infty_c(U)$, it follows that $u$ is a distributional 
$L^\infty$-solution of the stochastic IBVP \eqref{trasport}. 
Then, we may extended $u(t,x)$ by zero for $x \in \R^d \setminus U$,
and apply Lemma \ref{leuni} to obtain, for all $\psi \in C^\infty_c([0,T) \times \R^d)$, and
any $\beta \in C^2(\R)$, with $\beta(0)= 0$, that $v(t,x)= \mathbb{E}(\beta(u(t,x)))$ satisfies 
\begin{equation}
\label{IBVPEXTUNQ}
   \begin{aligned}
  \int_0^T  \int_{\R^d}   v(t,x) \, \partial_t  \psi(t,x) \,dxdt &= - \int_{0}^{T} \int_{\R^d}  v(t,x) \, b^i(t,x) \, \partial_i  \psi(t,x)  \ dx dt 
\\[5pt]
&- \int_{0}^{T} \int_{\R^d}  v(t,x) \, \dive b(t,x) \, \psi(t,x) \ dx dt 
\\[5pt]
&-  \frac{1}{2} \int_{0}^{T}  \int_{\R^d}  v(t,x) \, \Delta \psi(t,x) \, dx dt.
\end{aligned}
\end{equation}

2. Consider by condition 
\eqref{BCONDUNQ} a non-negative function $\alpha \in L^1_\loc(\R)$ 
such that, $|b(t,x)| \leq \alpha(t)$ almost everywhere.
Then, for each $\theta > 0$ by Lusin's Theorem (see
Evans-Gariepy \cite{EG}, Section 1.2), there exists a compact set
$\clg{I}_\theta \subset [-2T,2T]$, such that,
$\clg{H}^1([-2T,2T]-\clg{I}_\theta) < \theta$ and
$\alpha|_{\clg{I}_\theta} =:\alpha_\theta$ is a non-negative
continuous function. Thus, we define $k_\theta:= \max_{t \in
\clg{I}_\theta} \alpha_\theta(t)$.

The main issue is to consider a non-negative function $\vp(t,x)$
with compact support, which satisfies 
\begin{equation}
\label{HJED}
  \partial_t \vp(t,x) + k_\theta \; |\nabla \vp(t,x)|
  + \frac{1}{2} \Delta \vp(t,x) \leq 0.
\end{equation}
Fix $t_0 \in [0,T]$ and choose a non-negative function $\zeta \in C_c^\infty([0,\infty))$, such that
$$
   \zeta' \leq 0, \quad 0 \leq \zeta'' \leq \frac{-\zeta'}{R},
$$
where $R> 0$ is the diameter of the support of $\zeta$. Then, we define 
$$
    \vp(t,x):= \zeta(k_\theta |t-t_0| + |x|), 
$$
and observe that \eqref{HJED} is satisfied for a.e. $(t,x) \in (-\infty, t_0) \times \R^d$. 

Now,  let $\chi \in C_c^\infty\big([0,2T) \big)$ be a non-negative test function. 
Then, taking $\psi(t,x)= \chi(t) \, \vp(t,x)$ in \eqref{IBVPEXTUNQ} we have
\begin{equation}
\label{UNIQINEQ}
      \begin{aligned}
  &\int_0^T  \int_{\R^d} v(t,x) \, \chi'(t) \, \vp(t,x) \,dx dt 
\\[5pt]  
  &= - \int_{0}^{T} \int_{\R^d}  v(t,x) \chi(t) \Big( \partial_t \vp(t,x) + b(t,x) \cdot  \, \nabla  \vp(t,x)  + \frac{1}{2} \Delta \vp(t,x) \Big) \, dx  dt
\\[5pt]
& \quad - \int_{0}^{T} \int_{\R^d}  v(t,x) \, \dive b(t,x) \, \chi(t) \, \vp(t,x) \ dx dt
\\[5pt]  
  &\geq - \int_{[0,T] \cap \clg{I}_\theta^c} \int_{\R^d}  v(t,x) \chi(t) \Big( \partial_t \vp(t,x) + \alpha(t) |\nabla  \vp(t,x)|  + \frac{1}{2} \Delta \vp(t,x) \Big)  dx  dt
\\[5pt]
& \quad - \int_{0}^{T} \int_{\R^d}  v(t,x) \, \gamma(t) \, \chi(t) \, \vp(t,x) \ dx dt,
\end{aligned}
\end{equation}
where we have used \eqref{BCONDUNQ} and the above assumptions on $\vp$. 
Hence we take $\chi(t)$ be the characteristic function
of the interval $[\delta,t_0-\delta]$ for any $\delta>0$ (sufficiently small).  
Therefore, passing to the limit as $\delta \to 0$ and also $\theta \to 0$, we obtain 
 from equation \eqref{UNIQINEQ}
$$
    \int_{\R^d} v(t,x) \, \vp(t,x) \,dx \leq \int_0^T  \gamma(t) \int_{\R^d} v(t,x) \, \vp(t,x) \ dx dt. 
$$
Applying the Gronwall Inequality,
we obtain that $v(t,x)= 0$ a.e.. Thus taking $\beta(z)=z^{2}$, 
we conclude that  $u= 0$ almost sure in $U_T \times \Omega$.

3. Finally, since $u= 0$ almost sure in $U_T \times \Omega$, it follows from \eqref{IBVPEXTUN} for any test function 
$\vp \in C^\infty_c(\R^d)$, and all $t \in [0,T]$
\begin{equation}\label{euauni}
  \int_{0}^{t} \int_{\Gamma} \gamma u(s,r) \, \vp(r) \ b \cdot \mathbf{n}  \ dr ds
  + \int_{0}^{t} \int_{\Gamma} \gamma u(s,r) \, \vp(r)\,  \mathbf{n}_i
  \ dr \circ dB_{s}^{i}= 0.
\end{equation}
Therefore, taking the covariation with respect to $B^j$, we obtain
$$
   \int_{0}^{t} \int_{\Gamma} \gamma u(s,r) \, \vp(r)\,  \mathbf{n}_j  \ dr ds= 0, \quad (\forall j=1,\ldots,d),
$$
which implies that $\gamma u= 0$ almost sure in $[0,T] \times \Gamma \times \Omega$.
\end{proof}

%%%%%%%%%%%%%%%%%%%%%%%%%%%%%
\section{Appendix} 
%%%%%%%%%%%%%%%%%%%%%%%%%%%%%

At this point we fix some notation and material used through of this paper.

\medskip
Let us fix a stochastic basis with a
$d$-dimensional Brownian motion 
$$
   \big( \Omega, \mathcal{F}, \{
   \mathcal{F}_t: t \in [0,T] \}, \mathbb{P}, (B_{t}) \big).
$$   
Then, we recall to help the intuition, the following definitions 
$$
\begin{aligned}
\text{It\^o:}&  \ \int_{0}^{t} X_s dB_s=
\lim_{n    \rightarrow \infty}   \sum_{t_i\in \pi_n, t_i\leq t}  X_{t_i}    (B_{t_{i+1} \wedge t} - B_{t_i}),
\\[5pt]
\text{Stratonovich:}&  \ \int_{0}^{t} X_s  \circ dB_s=
\lim_{n    \rightarrow \infty}   \sum_{t_i\in \pi_n, t_i\leq t} \frac{ (X_{t_{i+1} \wedge t   } + X_{t_i} ) }{2} (B_{t_{i+1} \wedge t} - B_{t_i}),
\\[5pt]
\text{Covariation:}& \ [X, Y ]_t =
\lim_{n    \rightarrow \infty}   \sum_{t_i\in \pi_n, t_i\leq t} (X_{t_{i+1} \wedge t   } - X_{t_i} )  (Y_{t_{i+1} \wedge t} - Y_{t_i}),
\end{aligned}
$$
where $\pi_n$ is a sequence of finite partitions of $ [0, T ]$ with size $ |\pi_n| \rightarrow 0$ and
elements $0 = t_0 < t_1 < \ldots  $. The limits are in the sense of probability, and uniformly in time
on compact intervals. Details about these facts can be found in Kunita 
\cite{Ku2}. Also we address from that book, 
Itô's formula, the chain rule for the stochastic integral, 
for any continuous d-dimensional semimartingale $X = (X_1,X_2,\ldots,X_d)$, 
and twice continuously differentiable and real valued function f on   $\mathbb{R}^{d}$.

\medskip
Let $U \subset \R^{n}$ be an open set, and $\Gamma$ its boundary.
A map $\Psi:[0,1] \times \Gamma \to \ol{U}$ is said an 
admissible deformation, when satisfies the following conditions:
\begin{enumerate}
\item[$(1)$] For all $r \in \Gamma$, $\Psi(0,r)=r$.

\item [$(2)$] The derivative of the map $[0,1] \ni \tau \mapsto \Psi(\tau, r)$
at $\tau= 0$ is not orthogonal
to $\mathbf{n}(r)$, for each $r \in \Gamma$.
\end{enumerate}
Moreover, for each $\tau\in[0,1]$, we denote:
$\Psi_\tau$ the mapping from $\Gamma$ to $\ol{U}$, given by $\Psi_\tau(x):=\Psi(\tau,x)$;
$\Gamma^\tau= \Psi_\tau(\Gamma)$;
$\mathbf{n}^\tau$ the unit outward normal field in $\Gamma^\tau$. In particular,
$\mathbf{n}^0(r)= \mathbf{n}(r)$ is the unit outward normal field in $\Gamma$.

Now, we define a level set function $h$ associated with the 
deformation $\Psi_\tau$.
For $\delta> 0$ sufficiently small we define
$$
   h(x):= \left \{ 
         \begin{aligned}
           \min\{\tau,\delta\}, &\quad \text{if $x \in U$},
            \\[5pt]
           - \min\{\tau,\delta\},& \quad \text{if $x \in \Bbb{R}^n \setminus U$}.           
         \end{aligned}\right.
$$
The function $h(x)$ is Lipschitz continuous in $\Bbb{R}^n$, and $C^2$ on the
closure of $\left\lbrace x \in \Bbb{R}^n: |h(x)|< \delta \right\rbrace$, see Gilbarg, Trudinger \cite{DGNST}, p. 355.

\bigskip
Given a function  $f \in L^1(U)$, we recall the global approximation by smooth functions, that is, $f_\varepsilon \in L^1(U) \cap C^\infty(\ol{U})$, 
such that, $f_\ve \to f$ in $L^1$, see Evans, Gariepy \cite{EG} Chapter 4.2, Theorem 1 and Theorem 3. In fact, this result follows from a convenient modification of the 
standard mollification of $f$ by a standard (symmetric) mollifier $\rho$, that is a positive radial and regular function with compact support in $\R^d$, 
such that $\int \rho(x) dx= 1$. For each $\ve> 0$, we define $\rho_\ve(x):= \ve^{-d} \rho(\frac{x}{\ve})$.
For convenience, that is to fix the notation, let us give the main idea. 
For any $\ve> 0$ fixed, $0 \leq \delta \leq \ve$, and $y \in \ol{U}$, we define 
$$
    y^\ve:= y + \lambda \, \ve \, \nabla h(y),
$$ 
for $\lambda > 0$ sufficiently large. Then, we take a standard  mollifier $\rho_{\varepsilon}$, and for any  $u \in  L_{\loc}^{1}(U_T) $, we define
the following (space) global approximation
$$
    u_{\varepsilon}(t,y) \equiv (u \ast_{\mathbf{n}} \rho_\ve) (t,y):= \int_{U} u(t,z) \rho_{\varepsilon}(y^\ve-z) \ dz.
$$
Therefore, $u_\ve \in L^1_\loc([0,T]; C^\infty(\ol{U}))$ and converges to $u$ in $L^1_\loc$. 

%%%%%%%%%%%%%%%%%%%%%%%%
\section*{Acknowledgements}
%%%%%%%%%%%%%%%%%%%%%%%%

Conflict of Interest: The author 
Wladimir Neves has received research grants
from CNPq through the grant 308064/2019-4,
and also by FAPESP through the grant 2013/15795-9. 
C. Olivera  is partially supported by FAPESP 
by the grants 2017/17670-0 and   2015/07278-0.
		
%%%%%%%%%%%%%%%%%%%

\end{document}